\numberwithin{equation}{section}
\newtheorem{theorem}{Theorem}[section]
\newtheorem{corollary}[theorem]{Corollary}
\newtheorem{lemma}[theorem]{Lemma}
\newtheorem{remark}[theorem]{Remark}
\newtheorem{example}[theorem]{Example}
\newtheorem{definition}[theorem]{Definition}
\newproof{proof}{Proof}
\journal{Linear and Multilinear Algebra}
\begin{document}

\begin{frontmatter}

\title{Reverse order law for the inverse along an element}

\author[1,2]{Huihui Zhu}
\ead{ahzhh08@sina.com}
\author[1]{Jianlong Chen\corref{cor}}
\ead{jlchen@seu.edu.cn}
\cortext[cor]{Corresponding author}

\author[2,3]{Pedro Patr\'{i}cio}
\ead{pedro@math.uminho.pt}

\address[1]{Department of Mathematics, Southeast University, Nanjing 210096, China.}
\address[2]{CMAT-Centro de Matem\'{a}tica, Universidade do Minho, Braga 4710-057, Portugal.}
\address[3]{Departamento de Matem\'{a}tica e Aplica\c{c}\~{o}es, Universidade do Minho, Braga 4710-057, Portugal.}

\begin{abstract}
In this paper, we introduce a new concept called left (right) g-MP inverse in a $*$-monoid. The relations of this type of generalized inverse with left inverse along an element are investigated. Also, the reverse order law for the inverse along an element is studied. Then, the existence criteria and formulae of the inverse of the product of triple elements along an element are investigated in a monoid. Finally, we further study left and right g-MP inverses, the inverse along an element in the context of rings.
\end{abstract}

\begin{keyword}

(Left, Right) Inverses along an element \sep Reverse order law \sep Green's preorders \sep Semigroups \sep Rings

\MSC[2010] 15A09 \sep 16W10 \sep 20M99

\end{keyword}

\end{frontmatter}


\section { \bf Introduction}

There are many types of generalized inverses in mathematical literature, such as group inverses, Drazin inverses \cite{Drazin1}, Moore-Penrose inverses \cite{Penrose}, (left, right) inverses along an element \cite{Mary,Zhu and chen,Zhu and p} and so on. Many properties of these generalized inverses were considered in different settings. In particular, a large amount of work has been devoted to the study of the reverse order law for group inverses and Moore-Penrose inverses. However, few results have been presented concerning the reverse order law for the inverse along an element since it was introduced.

Throughout this paper, $S$ is a monoid. Let $*$ be an involution on $S$, that is the involution $*$ satisfies $(x^*)^* = x$ and $(xy)^* = y^*x^*$ for all $x,y\in S$. We call $S$ a $*$-monoid if there exists an involution on $S$.

In this article, we introduce a new concept called left (right) g-MP inverse in a $*$-monoid $S$. An element $a\in S$ is called left (resp., right) g-MP invertible if $Sa=Sa^2=Saa^*a$ (resp., $aS=a^2S=aa^*aS$). The relations of this type of generalized inverse with left inverse along an element will be considered in a monoid. Also, the reverse order law for the inverse along an element is studied. Then, the existence criteria and formulae of the inverse of the product of triple elements along an element are investigated. Finally, we further study left and right g-MP inverses, the inverse along an element in rings.

An element $a$ in  $S$ is called (von Neumann) regular if there exists $x\in S$ such that $a=axa$. Such $x$ is called an inner inverse of $a$, and is denoted by $a^{(1)}$. We call $a\in S$ left (resp., right) regular if $a\in Sa^2$ (resp., $a\in a^2S$). The symbol $a\{1\}$ means the set of all inner inverses of $a\in S$.

 Recall that an element $a\in S$ (with involution $*$) is Moore-Penrose invertible (see \cite{Penrose}) if there exists $x\in S$ satisfying the following equations
\begin{center}
${\rm(i)}~axa=a$~~ ${\rm (ii)}~xax=x$~~ ${\rm (iii)}~(ax)^*=ax$~~ ${\rm (iv)}~(xa)^*=xa$.
\end{center}
Any element $x$ satisfying the equations above is called a Moore-Penrose inverse of $a$. If such $x$ exists, then it is unique and is denoted by $a^\dag$. If $x$ satisfies the equations (i) and (iii), then $x$ is called a $\{1,3\}$-inverse of $a$, and is denoted by $a^{(1,3)}$. If $x$ satisfies the equations (i) and (iv), then $x$ is called a $\{1,4\}$-inverse of $a$, and is denoted by $a^{(1,4)}$. Recall that $a^\dag$ exists if and only if both $a^{(1,3)}$ and $a^{(1,4)}$ exist. In this case, $a^\dag=a^{(1,4)}aa^{(1,3)}$. If $x$ in equations (i) and (ii) satisfies $ax=xa$, then $a$ is group invertible. Moreover, the group inverse of $a$ is unique if it exists, and is denoted by $a^\#$. It is known that $a\in S$ is group invertible if and only if it is both left and right regular. We recall that an element $a\in S$ is EP if $a\in S^\# \cap S^\dag$ and $a^\#=a^\dag$. The symbols $S^\dag$ and $S^\#$ denote the sets of all Moore-Penrose invertible and group invertible elements in $S$, respectively. A well-known characterization of EP elements is that $a$ is EP if and only if $a\in S^\dag$ and $aa^\dag=a^\dag a$.

Green's preorders \cite{Green} in $S$ are defined by: (i) $a\leq_\mathcal{L}b \Leftrightarrow Sa \subseteq Sb \Leftrightarrow$  $a=xb$ for some $x\in S$; (ii) $a\leq_\mathcal{R}b \Leftrightarrow aS \subseteq bS \Leftrightarrow$ $a=by$ for some $y\in S$; (iii) $a\leq_\mathcal{H}b \Leftrightarrow a\leq_\mathcal{L}b ~~{\rm and}~~ a\leq_\mathcal{R}b$. Let $a,b,d\in S$. An element $b$ is a left (resp., right) inverse of $a$ along $d$ \cite{Zhu and chen} if $bad=d$ (resp., $dab=b$) and $b\leq_\mathcal{L}d$ (resp., $b\leq_\mathcal{R}d$). By $a_l^{\parallel d}$ and $a_r^{\parallel d}$ we denote a left and a right inverse of $a$ along $d$, respectively. We say that $a$ is invertible along $d$ \cite{Mary} if there exists $b$ in $S$ such that $bad=d=dab$ and $b\leq_\mathcal{H} d$. If such $b$ exists then it is called the inverse of $a$ along $d$. Moreover, it is unique and is denoted by $a^{\parallel d}$. It is known \cite{Zhu and chen} that $a$ is both left and right invertible along $d$ if and only if it is invertible along $d$ if and only if $d \leq_\mathcal{H} dad$.

\section{Left (right) g-MP inverses and reverse order law}

We begin this section with the following definition, whose properties and relations with left inverse along an element are one of the main study targets in this paper.

\begin{definition} \label{Def 1} Let $S$ be a $*$-monoid and let $a\in S$. We call $a$ left g-MP invertible if $Sa=Sa^2=Saa^*a$.
\end{definition}

We next give several examples of left g-MP invertible elements.

\begin{example}
{\rm (i) The unity 1 in a $*$-monoid is left g-MP invertible.

(ii) An EP element $a$ in a $*$-monoid $S$ is left g-MP invertible. Indeed, $a=a^\#a^2\in Sa^2$, i.e., $Sa=Sa^2$. Also, $a=aa^\dag a=(a^\dag)^*a^*a=(a^\dag)^*a^\dag aa^*a\in Saa^*a$. So, $Sa= Sa^2=Saa^*a$.

(iii) Let $S=M_2(\mathbb{C})$ be the monoid of $2 \times 2$ complex matrices and let the involution be the conjugate transpose. If $A = \left(\begin{smallmatrix} 1 &  0 \\ 1 & 0 \end{smallmatrix}\right)\in S$, then $A$ is left g-MP invertible since $A=I \cdot A^2=\frac{1}{2}I \cdot AA^*A$.}
\end{example}

Next, we give the definition of right g-MP inverse in a $*$-monoid.

\begin{definition} \label{Def 2} Let $S$ be a $*$-monoid and let $a\in S$. We call $a$ right g-MP invertible if $aS=a^2S=aa^*aS$.
\end{definition}

\begin{lemma} \label{left mary}{\rm \cite[Theorem 2.3]{Zhu and chen}} Let $a,d\in S$. Then

\emph{(i)} $a$ is left invertible along $d$ if and only if $d \leq_\mathcal{L}dad$.

\emph{(ii)} $a$ is right invertible along $d$ if and only if $d \leq_\mathcal{R}dad$.
\end{lemma}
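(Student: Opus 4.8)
The plan is to prove each biconditional by directly unwinding the definitions of ``left (right) inverse of $a$ along $d$'' and of Green's preorders, and then to obtain part (ii) from part (i) by a left--right symmetry. For the ``only if'' direction of (i): let $b$ be a left inverse of $a$ along $d$, so that $bad = d$ and $b \leq_\mathcal{L} d$. The latter gives $b = xd$ for some $x \in S$, and substituting into $bad = d$ yields $d = x(dad) \in S(dad)$, that is, $d \leq_\mathcal{L} dad$.

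For the ``if'' direction of (i): assume $d \leq_\mathcal{L} dad$, say $d = y(dad)$ with $y \in S$. The crux is to exhibit the correct witness, and I would take $b := yd$. Then $b \in Sd$, so the condition $b \leq_\mathcal{L} d$ holds for free, while $bad = (yd)ad = y(dad) = d$. Hence $b$ is a left inverse of $a$ along $d$. The only subtlety is that a single element $y$ must supply both requirements on $b$ simultaneously; the choice $b = yd$ does exactly that, so I do not expect any genuine obstacle --- the whole content of the lemma is this witness construction together with the elementary substitution in the forward direction.

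Part (ii) is the mirror image of (i): passing to the opposite monoid $S^{\mathrm{op}}$ interchanges $Sx$ with $xS$, hence $\leq_\mathcal{L}$ with $\leq_\mathcal{R}$, converts ``left invertible along $d$'' into ``right invertible along $d$'', and leaves $dad$ unchanged, so applying (i) in $S^{\mathrm{op}}$ gives (ii). Equivalently, one argues directly: if $b$ is a right inverse of $a$ along $d$ then $dab = d$ and $b = dz$ for some $z \in S$, so $d = dab = (dad)z \in (dad)S$, i.e.\ $d \leq_\mathcal{R} dad$; conversely, writing $d = (dad)w$ and setting $b := dw \in dS$ gives $b \leq_\mathcal{R} d$ and $dab = (dad)w = d$. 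Either way, the only thing to watch is the left/right bookkeeping.
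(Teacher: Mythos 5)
Your proof is correct: in the forward direction the substitution $d = bad = x(dad)$ works, and in the converse the witness $b = yd$ satisfies both $bad = y(dad) = d$ and $b \leq_\mathcal{L} d$ simultaneously, with the dual bookkeeping for (ii) equally sound. The paper states this lemma only as a citation of \cite[Theorem 2.3]{Zhu and chen} and gives no proof, but your direct unwinding of the definitions is exactly the standard argument behind that result.
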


The following theorem characterizes the relations between $Sa=Saa^*a$ and left inverse along an element in a $*$-monoid.

\begin{theorem} \label{left mary a} Let $S$ be a $*$-monoid and let $a\in S$. Then following conditions are equivalent{\rm :}

\emph{(i)} $Sa=Saa^*a$.

\emph{(ii)} $a^*$ is left invertible along $a$.

\end{theorem}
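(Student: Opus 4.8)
The plan is to prove the equivalence by reducing condition (ii) to a membership condition via Lemma \ref{left mary}, and then matching it directly with condition (i). By Lemma \ref{left mary}(i), the statement ``$a^*$ is left invertible along $a$'' is equivalent to $a \leq_\mathcal{L} a a^* a$, which by the definition of Green's $\mathcal{L}$-preorder means $Sa \subseteq S a a^* a$, i.e. $a \in S a a^* a$. So the whole theorem collapses to showing that $Sa = Saa^*a$ if and only if $a \in Saa^*a$.

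The forward direction (i) $\Rightarrow$ (ii) is immediate: if $Sa = Saa^*a$ then in particular $a \in Sa = Saa^*a$, so $a \leq_\mathcal{L} aa^*a$ and Lemma \ref{left mary}(i) gives that $a^*$ is left invertible along $a$. For the reverse direction (ii) $\Rightarrow$ (i), I would start from $a \in Saa^*a$, so $a = xaa^*a$ for some $x \in S$. The inclusion $Saa^*a \subseteq Sa$ is automatic (everything of the form $saa^*a$ equals $(saa^*)a \in Sa$), so the only thing to establish is $Sa \subseteq Saa^*a$, equivalently $a \in Saa^*a$, which is exactly what we assumed. Hence the two inclusions combine to $Sa = Saa^*a$.

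So in fact the main content is simply recognizing that $Sa = Saa^*a$ is equivalent to the single inclusion $a \in Saa^*a$, because the reverse inclusion $Saa^*a \subseteq Sa$ always holds trivially — there is no genuine obstacle here, and the proof is essentially a translation between the $\mathcal{L}$-order language of Lemma \ref{left mary} and the equation defining left g-MP type conditions. The one point to state carefully is why $a \in Saa^*a$ already forces $Sa \subseteq Saa^*a$: from $a = xaa^*a$ we get, for any $s \in S$, $sa = s(xaa^*a) = (sx)aa^*a \in Saa^*a$. That closes the loop and completes the equivalence.
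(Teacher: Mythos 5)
Your proof is correct and follows essentially the same route as the paper: both directions reduce, via Lemma \ref{left mary}(i), to the observation that ``$a^*$ is left invertible along $a$'' is the same as $a\leq_\mathcal{L}aa^*a$, i.e.\ $Sa\subseteq Saa^*a$, while the reverse inclusion $Saa^*a\subseteq Sa$ is automatic. The paper leaves these last two points implicit; you spell them out, which is fine but does not change the argument.
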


\begin{proof} (i) $\Rightarrow$ (ii) Suppose $Sa=Saa^*a$. Then $a\leq_\mathcal{L} aa^*a$. Hence, $a^*$ is left invertible along $a$ by Lemma \ref{left mary}.

(ii) $\Rightarrow$ (i) If $a^*$ is left invertible along $a$, it follows from Lemma \ref{left mary} that $a\leq_\mathcal{L} aa^*a$. So, $Sa=Saa^*a$. \hfill$\Box$
\end{proof}

Dually, we have the following result.

\begin{theorem} \label{right mary a} Let $S$ be a $*$-monoid and let $a\in S$. Then following conditions are equivalent{\rm :}

\emph{(i)} $aS=aa^*a S$.

\emph{(ii)} $a^*$ is right invertible along $a$.

\end{theorem}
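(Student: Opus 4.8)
The plan is to mirror the proof of Theorem \ref{left mary a} verbatim, replacing $\mathcal{L}$ by $\mathcal{R}$ throughout and invoking part (ii) of Lemma \ref{left mary} in place of part (i). Concretely, for the implication (i) $\Rightarrow$ (ii), I would start from $aS = aa^*aS$, observe that this says exactly $a \leq_\mathcal{R} aa^*a$ by the definition of Green's preorder $\leq_\mathcal{R}$ (namely $aS \subseteq (aa^*a)S$), then note that $aa^*a = a(a^*)a$ so that $a \leq_\mathcal{R} a(a^*)a$, and apply Lemma \ref{left mary}(ii) with the element $a$ playing the role of ``$a$'' there and $a^*$ playing the role of the element whose one-sided inverse along $a$ we seek; this yields that $a^*$ is right invertible along $a$.

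For the converse (ii) $\Rightarrow$ (i), I would assume $a^*$ is right invertible along $a$, apply Lemma \ref{left mary}(ii) in the reverse direction to conclude $a \leq_\mathcal{R} a a^* a$, i.e. $aS \subseteq aa^*aS$, and then get the reverse inclusion $aa^*aS \subseteq aS$ for free since $aa^*a = a(a^*a) \in aS$. Combining the two inclusions gives $aS = aa^*aS$, which is (i).

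Since both Theorems \ref{left mary a} and \ref{right mary a} are immediate translations of Lemma \ref{left mary}, there is essentially no obstacle here; the only thing to be careful about is the bookkeeping of which element is ``$a$'' and which is ``$d$'' in the statement of Lemma \ref{left mary}, together with making sure the trivial inclusion $aa^*aS \subseteq aS$ is stated (it is what upgrades the preorder $\leq_\mathcal{R}$ to the equality of principal right ideals). I would also remark, as the paper does after Theorem \ref{left mary a} via the word ``Dually,'' that this result is the formal dual of the previous one under the anti-isomorphism $x \mapsto x$ on the opposite monoid, so one could alternatively just cite duality rather than repeat the argument — but writing the two-line proof explicitly is cleaner for the reader.
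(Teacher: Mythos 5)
Your proof is correct and is exactly the argument the paper intends: the paper proves Theorem \ref{left mary a} via Lemma \ref{left mary}(i) and then dismisses this dual statement with the single word ``Dually,'' so your explicit $\mathcal{R}$-version (including the trivial inclusion $aa^*aS\subseteq aS$) is the same approach written out. One tiny wording slip: in applying Lemma \ref{left mary}(ii) you should say $a$ plays the role of ``$d$'' and $a^*$ plays the role of ``$a$'' in the lemma (you wrote that $a$ plays the role of ``$a$''), but your displayed condition $a\leq_\mathcal{R}aa^*a$ shows you instantiated it correctly.
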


It follows from \cite[Theorem 2.16]{Zhu and chen} that $aS=aa^*aS$ if and only if $Sa=Saa^*a$ if and only if $a$ is Moore-Penrose invertible.
Hence, we get

\begin{corollary} \label{left right MP} Let $S$ be a $*$-monoid and let $a\in S$. Then following conditions are equivalent{\rm :}

\emph{(i)} $a$ is Moore-Penrose invertible.

\emph{(ii)} $a^*$ is left invertible along $a$.

\emph{(iii)} $a^*$ is right invertible along $a$.

In this case, $(a^\dag)^*$ is a left (right) inverse of $a^*$ along $a$.
\end{corollary}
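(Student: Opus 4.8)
The plan is to deduce Corollary \ref{left right MP} by combining Theorem \ref{left mary a}, Theorem \ref{right mary a}, and the cited equivalence from \cite[Theorem 2.16]{Zhu and chen}. The logical skeleton is short: the cited result gives that the three ring-theoretic conditions ``$a$ is Moore-Penrose invertible'', ``$Sa = Saa^*a$'', and ``$aS = aa^*aS$'' are mutually equivalent; Theorem \ref{left mary a} identifies the second with ``$a^*$ is left invertible along $a$''; and Theorem \ref{right mary a} identifies the third with ``$a^*$ is right invertible along $a$''. Chaining these equivalences yields (i) $\Leftrightarrow$ (ii) $\Leftrightarrow$ (iii). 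So the first paragraph of the proof would just assemble these three bricks, with no computation at all.

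For the final assertion — that $(a^\dag)^*$ is a left (and right) inverse of $a^*$ along $a$ — I would argue directly from the defining equations. Assume $a$ is Moore-Penrose invertible. To show $(a^\dag)^*$ is a left inverse of $a^*$ along $a$, I must check the two defining conditions of a left inverse along an element: first $(a^\dag)^* a^* a = a$, and second $(a^\dag)^* \leq_{\mathcal{L}} a$. The first follows by taking involutions: $(a^\dag)^* a^* a = (a^\dag)^* a^* (a^{\dag\dagger}\cdots)$ — more cleanly, $\left((a^\dag)^* a^* a\right)^{*} = a^* a a^\dag = (a a^\dag a)^{*}\cdot(\text{rearrange})$; the honest way is to use $(xa)^{*} = xa$ with $x = a^\dag$, i.e. $(a a^\dag)^{*} = a a^\dag$ is the wrong one — I want $a^\dag a$ symmetric, which is equation (iv): $(a^\dag a)^{*} = a^\dag a$. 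Then $(a^\dag)^* a^* a = \left(a a^\dag\right)^{*} a$; wait, $(a^\dag)^* a^* = (a a^\dag)^{*} = a a^\dag$ by equation (iii). Hence $(a^\dag)^* a^* a = a a^\dag a = a$, as desired. For $(a^\dag)^* \leq_{\mathcal{L}} a$, note $(a^\dag)^* = (a^\dag a a^\dag)^{*} = (a^\dag)^* (a a^\dag)^{*}\cdot\ldots$; simpler: $(a^\dag)^{*} = (a^{\dag})^{*}(a a^{\dag})^{*} = (a^{\dag})^{*} a a^{\dag}$, hmm that puts $a$ in the middle. Use instead $a^\dag = a^\dag a a^\dag$, take $*$: $(a^\dag)^{*} = (a^\dag)^{*} (a^\dag)^{*}\cdot$... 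The cleanest: from $(a^\dag a)^{*} = a^\dag a$ we get $(a^\dag)^{*} a^{*} = a^\dag a$, so $(a^\dag)^{*} = a^\dag a (a^{*})^{\dagger}$-type expression is not available, but $(a^\dag)^{*} = (a^\dag)^{*} a^{*} (a^\dag)^{*} = a^\dag a (a^\dag)^{*} \in Sa$, giving $(a^\dag)^{*} \leq_{\mathcal{L}} a$. The right-handed statement is obtained by the symmetric argument using equation (iii).

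The main obstacle, such as it is, is purely bookkeeping: getting the right Moore-Penrose equation paired with the right one-sided claim, and being careful that ``left inverse along $a$'' requires membership in $Sa$ (not $aS$). There is no genuine difficulty — the substantive content was already carried by Theorems \ref{left mary a} and \ref{right mary a} and by \cite[Theorem 2.16]{Zhu and chen}. I would therefore keep the proof to a few lines, stating the equivalence chain and then verifying the two defining relations for $(a^\dag)^*$ as a one-sided inverse along $a$, leaving the dual verification to the reader with a phrase like ``similarly''.
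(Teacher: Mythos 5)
Your overall route is exactly the paper's: chain $Sa=Saa^*a \Leftrightarrow aS=aa^*aS \Leftrightarrow a\in S^\dag$ from \cite[Theorem 2.16]{Zhu and chen} with Theorems \ref{left mary a} and \ref{right mary a} to get (i)$\Leftrightarrow$(ii)$\Leftrightarrow$(iii), then verify the two defining conditions for $(a^\dag)^*$ directly. The equivalence part and the computation $(a^\dag)^*a^*a=(aa^\dag)^*a=aa^\dag a=a$ are fine.

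However, your verification of $(a^\dag)^*\leq_{\mathcal L}a$ contains a concrete error. From $(a^\dag a)^*=a^\dag a$ you infer ``$(a^\dag)^*a^*=a^\dag a$'', but $(a^\dag a)^*=a^*(a^\dag)^*$, not $(a^\dag)^*a^*$; in fact $(a^\dag)^*a^*=(aa^\dag)^*=aa^\dag$ by equation (iii). Consequently your final expression $(a^\dag)^*a^*(a^\dag)^*$ equals $aa^\dag(a^\dag)^*$, and the element $a^\dag a(a^\dag)^*$ you wrote is neither equal to it nor visibly a member of $Sa$ (it ends in $(a^\dag)^*$, so claiming it lies in $Sa$ is circular). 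The step is easily repaired by grouping the other way: $(a^\dag)^*=(a^\dag)^*\bigl(a^*(a^\dag)^*\bigr)=(a^\dag)^*(a^\dag a)^*=(a^\dag)^*a^\dag a\in Sa$, which is precisely the paper's computation $(a^\dag)^*=(a^\dag aa^\dag)^*=(a^\dag)^*a^\dag a\leq_{\mathcal L}a$. With that one line corrected, your proof coincides with the paper's.
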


\begin{proof} For the convenience, we only prove that $(a^\dag)^*$ is a left inverse of $a^*$ along $a$. As $(a^\dag)^*a^*a=(aa^\dag)^*a=aa^\dag a=a$ and $(a^\dag)^*=(a^\dag aa^\dag)^*=(a^\dag)^* a^\dag a \leq_\mathcal{L}a$, then $(a^\dag)^*$ is a left inverse of $a^*$ along $a$. \hfill$\Box$
\end{proof}

According to \cite[Theorem 11]{Mary} and Corollary \ref{left right MP}, $a\in S^\dag$ if and only if $a$ is invertible along $a^*$ if and only if $a^*$ is invertible along $a$. Moreover, $a^{\parallel a^*}=a^\dag$ and $(a^*)^{\parallel a}=(a^\dag)^*$. We also remark that $a\in S^\#$ if and only if $a$ is invertible along $a$ if and only if 1 is invertible along $a$. In this case, $a^{\parallel a}=a^\#$ and $1^{\parallel a}=aa^\#$.

From Corollary \ref{left right MP}, we get that $a$ is left g-MP invertible if and only if it is left regular and Moore-Penrose invertible. If $a\in S$ is both left and right g-MP invertible, then $a\in S^\# \cap S^\dag$.

The relations between left g-MP inverse and the recently introduced notion called left inverse along an element will be given in Theorem \ref{left g-MP} below. Herein, we first give relations between \{1,3\}-inverses and \{1,4\}-inverses of an element in a $*$-monoid.

\begin{lemma} \label{MP representation1} Let $S$ be a $*$-monoid and let $a\in S$. If $a=xaa^*a$ for some $x\in S$, then $(xa)^*$ is both a $\{1,3\}$-inverse and a $\{1,4\}$-inverse of $a$ and $a^\dag=(xa)^*a(xa)^*$.
\end{lemma}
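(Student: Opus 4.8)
The plan is to exhibit $(xa)^*$ as simultaneously a $\{1,3\}$-inverse and a $\{1,4\}$-inverse of $a$, and then read off the formula for $a^\dag$ from the identity $a^\dag=a^{(1,4)}aa^{(1,3)}$ recalled in the introduction, applied with both one-sided inverses taken to be $(xa)^*$. Thus the substantive work is to verify equations (i), (iii), (iv) for $x_0:=(xa)^*=a^*x^*$.

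The engine will be a commutation identity obtained by right-multiplying the hypothesis $a=xaa^*a$ by $a^*$: this gives $aa^*=x(aa^*)^2$, so with $c=aa^*=c^*$ we have $c=xc^2$, and taking adjoints $c=c^2x^*$; substituting one into the other yields $xc=x(c^2x^*)=(xc^2)x^*=cx^*$, that is, $xaa^*=aa^*x^*$. Granting this, equation (i) is immediate since $ax_0a=aa^*x^*a=xaa^*a=a$, and equation (iii) holds because $ax_0=aa^*x^*=xaa^*$ equals its own adjoint, $(xaa^*)^*=aa^*x^*=xaa^*$. Hence $x_0$ is a $\{1,3\}$-inverse of $a$.

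I expect the main obstacle to be equation (iv), the symmetry of $x_0a$. The plan here is to set $e=a^*xa$, so that $x_0a=a^*x^*a=e^*$, and to prove $e=e^*$. First, $ae^*=a(a^*x^*a)=(aa^*x^*)a=(ax_0)a=a$ by (i); taking adjoints gives $ea^*=a^*$. Right-multiplying this by $x^*a$ gives $e(a^*x^*a)=a^*x^*a$, i.e. $ee^*=e^*$; applying $*$ to this last identity gives $ee^*=e$, since $(ee^*)^*=ee^*$ while $(e^*)^*=e$. Hence $e^*=ee^*=e$, so $x_0a=e^*$ is Hermitian, which is (iv). Combined with (i), $x_0$ is a $\{1,4\}$-inverse of $a$.

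Finally, with (i), (iii), (iv) established, $x_0=(xa)^*$ is both a $\{1,3\}$- and a $\{1,4\}$-inverse of $a$; therefore $a^\dag$ exists and $a^\dag=a^{(1,4)}aa^{(1,3)}=x_0ax_0=(xa)^*a(xa)^*$. The only places any care is needed are the derivation of the commutation identity $xaa^*=aa^*x^*$ and the short chain $ea^*=a^*\Rightarrow ee^*=e^*\Rightarrow ee^*=e$; everything else is substitution.
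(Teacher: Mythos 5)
Your proof is correct, and it takes a genuinely different route from the paper's. For the $\{1,3\}$ part the paper simply cites an external lemma (\cite[Lemma 2.2]{Zhu and zhang}), whereas you prove it from scratch via the commutation identity $xaa^*=aa^*x^*$, which you correctly extract from $aa^*=x(aa^*)^2$ and its adjoint; this makes your argument self-contained. For the $\{1,4\}$ part the paper's method is a brute-force rewriting of $(xa)^*a$ into the manifestly Hermitian palindrome $a^*x^2(aa^*)^3(x^2)^*a$ by repeatedly substituting $a=xaa^*a$, while you instead set $e=a^*xa$ and run the short chain $ae^*=a \Rightarrow ea^*=a^* \Rightarrow ee^*=e^*$, then use the automatic symmetry of $ee^*$ to conclude $e=ee^*=e^*$. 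Each step checks out: in particular $(ee^*)^*=ee^*$ always holds, so $ee^*=e^*$ really does force $e=ee^*=e^*$, and hence $x_0a=e$ is Hermitian. Your structural argument is arguably cleaner and less error-prone than the paper's expansion, at the cost of first establishing the commutation identity; the paper's expansion is more mechanical but requires no preliminary lemma. The final step $a^\dag=a^{(1,4)}aa^{(1,3)}=(xa)^*a(xa)^*$ is identical in both.
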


\begin{proof} According to \cite[Lemma 2.2]{Zhu and zhang}, we know that $(xa)^*$ is a \{1,3\}-inverse of $a$.

Note that
\begin{eqnarray*}
 (xa)^*a &=& a^*x^*a=(xaa^*a)^*x^*a=a^*aa^*(x^*)^2a \\
   &=& a^*(xaa^*a)a^*(x^*)^2a=a^*x(xaa^*a)a^*aa^*(x^*)^2a \\
   &=& a^*x^2(aa^*)^3(x^2)^*a.
\end{eqnarray*}
Hence, $[(xa)^*a]^*=(xa)^*a$, that is, $(xa)^*$ is a \{1,4\}-inverse of $a$. Hence, $a^\dag=a^{(1,4)}aa^{(1,3)}=(xa)^*a(xa)^*$. \hfill$\Box$
\end{proof}

\begin{lemma} \label{MP representation2} Let $S$ be a $*$-monoid and let $a\in S$. If $a=aa^*ay$ for some $y\in S$, then $(ay)^*$ is both a $\{1,3\}$-inverse and a $\{1,4\}$-inverse of $a$ and $a^\dag=(ay)^*a(ay)^*$.
\end{lemma}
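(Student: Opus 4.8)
The plan is to deduce this from Lemma \ref{MP representation1} by passing to the involution. Applying $*$ to the hypothesis $a=aa^*ay$ gives
$$a^*=(aa^*ay)^*=y^*a^*aa^*=y^*\,a^*\,(a^*)^*\,a^*.$$
Setting $b=a^*$ and $x=y^*$, this is exactly the hypothesis $b=xbb^*b$ of Lemma \ref{MP representation1} for the element $b$. That lemma therefore yields that $(xb)^*=(y^*a^*)^*=ay$ is simultaneously a $\{1,3\}$-inverse and a $\{1,4\}$-inverse of $b=a^*$, and moreover $b^\dag=(xb)^*b(xb)^*=(ay)\,a^*\,(ay)$.

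It then remains to transport these facts back to $a$. First I would record the elementary fact that, for $z\in S$, applying $*$ to the Penrose-type equations shows that $z$ is a $\{1,3\}$-inverse (resp.\ a $\{1,4\}$-inverse) of $a^*$ if and only if $z^*$ is a $\{1,4\}$-inverse (resp.\ a $\{1,3\}$-inverse) of $a$: indeed $a^*za^*=a^*$ is equivalent to $az^*a=a$, while $(a^*z)^*=a^*z$ reads $z^*a=a^*z$, which is precisely the $\{1,4\}$-condition $(z^*a)^*=z^*a$ for $a$, and symmetrically $(za^*)^*=za^*$ is the $\{1,3\}$-condition for $a$ applied to $z^*$. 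Taking $z=ay$ and using the previous paragraph, we conclude that $(ay)^*$ is both a $\{1,4\}$-inverse and a $\{1,3\}$-inverse of $a$.

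Finally, since $(a^*)^\dag=(a^\dag)^*$ whenever the Moore-Penrose inverse exists, applying $*$ to $b^\dag=(ay)\,a^*\,(ay)$ gives $a^\dag=\big((ay)a^*(ay)\big)^*=(ay)^*a(ay)^*$, as claimed; this also agrees with the general formula $a^\dag=a^{(1,4)}aa^{(1,3)}$ with $a^{(1,4)}=a^{(1,3)}=(ay)^*$. An alternative route, avoiding the reduction, is to mimic the computation in the proof of Lemma \ref{MP representation1} verbatim with the roles of left and right multiplication interchanged, establishing directly that $[a(ay)^*]^*=a(ay)^*$; this is routine. In either approach there is no genuine obstacle: the only point requiring care is the bookkeeping of how the $\{1,3\}$- and $\{1,4\}$-conditions interchange under the involution, together with the identity $(a^*)^\dag=(a^\dag)^*$.
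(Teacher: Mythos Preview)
Your argument is correct. The paper itself gives no proof of this lemma, simply recording it as the dual of Lemma~\ref{MP representation1}; your reduction via the involution is precisely a rigorous way to formalize that duality, and the bookkeeping on how the $\{1,3\}$- and $\{1,4\}$-conditions swap under $*$ is handled correctly. The direct left--right mirror computation you mention at the end is the other natural reading of ``dual'', and either route is adequate here.
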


It follows from Lemma \ref{left mary} that $Sa=Sa^2$ if and only if $a$ is left invertible along $a$. Also, Theorem \ref{left mary a} ensures that $Sa=Saa^*a$ if and only if $a^*$ is left invertible along $a$. Suppose $Sa=Sa^2=Saa^*a$. Then there exist $x,y\in S$ such that $a=xa^2=yaa^*a$. So, $a=y(xa^2)a^*a$, which means $a \leq_\mathcal{L} a^2a^*a$ and hence $aa^*$ is left invertible along $a$ from Lemma \ref{left mary}. One may guess whether the converse holds? that is, if $aa^*$ is left invertible along $a$ implies $Sa=Sa^2=Saa^*a$? Theorem \ref{left g-MP} below illustrates this fact.

\begin{theorem} \label{left g-MP} Let $S$ be a $*$-monoid and let $a\in S$. Then the following conditions are equivalent{\rm :}

\emph{(i)} $a$ is left g-MP invertible.

\emph{(ii)} $aa^*$ is left invertible along $a$.

\emph{(iii)} $baa^*a=a$ and $Sb\subseteq Sa$ for some $b\in S$.

In this case, $yxa$ and $baa^*b$ are left inverses of $aa^*$ along $a$, for all $x$ and $y$ such that $a=xa^2=yaa^*a$.
\end{theorem}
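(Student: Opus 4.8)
The plan is to prove the cyclic chain of implications (i) $\Rightarrow$ (ii) $\Rightarrow$ (iii) $\Rightarrow$ (i), and then to verify the two explicit formulae for left inverses of $aa^*$ along $a$. The implication (i) $\Rightarrow$ (ii) is essentially the observation already sketched in the paragraph preceding the theorem: if $a$ is left g-MP invertible then $Sa = Sa^2 = Saa^*a$, so there exist $x,y \in S$ with $a = xa^2$ and $a = yaa^*a$; substituting the first into the second gives $a = y(xa^2)a^*a = (yx)a^2a^*a = (yx)a\,a^*a^*a \cdot$... — more carefully, $a = yxa \cdot aa^*a$, i.e. $a \leq_{\mathcal L} a \cdot (aa^*) \cdot a$ with witness $yxa$, which by Lemma \ref{left mary}(i) says exactly that $aa^*$ is left invertible along $a$. (Note $aa^*a = a(a^*a)$, so to get the shape $d(aa^*)d$ with $d = a$ we should write $a = yaa^*a = y\cdot a a^* \cdot a$ directly, and then use $a = xa^2$ only if we additionally want the witness to lie in $Sa$; in fact $a = yaa^*a$ already gives $a \leq_{\mathcal L} a(aa^*)a$.) So (i) $\Rightarrow$ (ii) is immediate from Lemma \ref{left mary}.

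For (ii) $\Rightarrow$ (iii): if $aa^*$ is left invertible along $a$, there is a left inverse $b = (aa^*)_l^{\parallel a}$ satisfying $b(aa^*)a = a$, that is $baa^*a = a$, and $b \leq_{\mathcal L} a$, i.e. $Sb \subseteq Sa$. This is just unwinding the definition of left inverse along an element. For (iii) $\Rightarrow$ (i): assume $baa^*a = a$ with $Sb \subseteq Sa$. We must show $Sa = Sa^2 = Saa^*a$. The inclusions $Sa^2 \subseteq Sa$ and $Saa^*a \subseteq Sa$ are automatic. From $baa^*a = a$ we get directly $a \in Saa^*a$, hence $Sa \subseteq Saa^*a$, so $Sa = Saa^*a$; and by Theorem \ref{left mary a} this also tells us $a^*$ is left invertible along $a$, hence by Corollary \ref{left right MP} that $a$ is Moore-Penrose invertible. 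It remains to get $Sa \subseteq Sa^2$, i.e. $a$ is left regular. Here I would use $Sb \subseteq Sa$: write $b = ca$ for some $c \in S$; then $a = baa^*a = ca\,aa^*a = c\,a^2\,a^*a \in Sa^2$. That closes the cycle. (This is also consistent with the remark after Corollary \ref{left right MP} that left g-MP $\Leftrightarrow$ left regular $+$ MP invertible.)

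Finally, for the "in this case" clause I would verify that $yxa$ and $baa^*b$ are left inverses of $aa^*$ along $a$, where $a = xa^2 = yaa^*a$ (such $x,y$ exist by (i)) and $b$ is a witness from (iii). For $yxa$: compute $(yxa)(aa^*)a = yx\,a^2a^*a = y(xa^2)a^*a = y\,a\,a^*a = yaa^*a = a$, and $yxa \in Sa$ so $yxa \leq_{\mathcal L} a$; by the definition this makes $yxa$ a left inverse of $aa^*$ along $a$. For $baa^*b$: using $baa^*a = a$ repeatedly, $(baa^*b)(aa^*)a = baa^*(baa^*a) = baa^*a = a$, and $baa^*b \in Sb \subseteq Sa$ (using $Sb \subseteq Sa$ from (iii)), so again $baa^*b \leq_{\mathcal L} a$ and it is a left inverse of $aa^*$ along $a$.

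I do not expect any step to be a genuine obstacle; the whole argument is a matter of correctly juggling the Green-preorder witnesses and substituting the defining equations in the right order. The one point that needs care is making sure, in each verification, that the candidate element genuinely lies in $Sa$ (the $\leq_{\mathcal L}$ part of "left inverse along $a$"): for $yxa$ this is obvious, for $baa^*b$ it relies on reusing the hypothesis $Sb \subseteq Sa$ rather than trying to prove membership from scratch. Also worth double-checking is that in (i) $\Rightarrow$ (ii) the correct reduction is $a = yaa^*a = y(aa^*)a$, matching the form $d(aa^*)d$ with $d=a$ required by Lemma \ref{left mary}(i), so that the witness for $a \leq_{\mathcal L} a(aa^*)a$ is simply $y$ (or $yxa$ if one wants a witness already in $Sa$, which is what the final formula records).
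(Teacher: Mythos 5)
Your overall architecture (the cycle (i) $\Rightarrow$ (ii) $\Rightarrow$ (iii) $\Rightarrow$ (i) plus direct verification of the two formulae) is fine, and the steps (i) $\Rightarrow$ (ii), (ii) $\Rightarrow$ (iii) and the two ``in this case'' verifications are correct and essentially the same as the paper's. But there is a genuine gap in (iii) $\Rightarrow$ (i), at exactly the point where the real work of the theorem happens. Writing $b=ca$ and substituting gives $a=ca^2a^*a$; this element ends in $a^*a$, not in $a^2$, so it exhibits $a$ as a member of $Saa^*a\subseteq Sa$ but does \emph{not} show $a\in Sa^2$. Nothing in the hypotheses lets you move the trailing $a^*a$ out of the way, so the claim ``$ca^2a^*a\in Sa^2$'' is unjustified and the left regularity of $a$ --- the only nontrivial part of (iii) $\Rightarrow$ (i) --- is not established.

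The paper closes this gap with Lemma \ref{MP representation1}: from $a=(na)aa^*a$ (with $b=na$) one concludes that $(na^2)^*$ is a $\{1,4\}$-inverse of $a$, so $(na^2)^*a$ is $*$-symmetric and
$a=a(na^2)^*a=a\bigl[(na^2)^*a\bigr]^*=aa^*na^2\in Sa^2$.
This ``flip through the $\{1,4\}$-inverse'' is the idea your argument is missing; the Moore--Penrose invertibility of $a$ that you correctly derive is not by itself enough --- it must be exploited in this specific way to turn an expression ending in $a^*a$ into one ending in $a^2$. Separately, the parenthetical remark in your (i) $\Rightarrow$ (ii) is wrong: $a=yaa^*a$ gives $a\leq_{\mathcal L}aa^*a$, which is the condition of Theorem \ref{left mary a} (equivalently, Moore--Penrose invertibility), whereas Lemma \ref{left mary} applied to $aa^*$ along $a$ requires $a\leq_{\mathcal L}a(aa^*)a=a^2a^*a$; these are not equivalent (a Moore--Penrose invertible element need not be left regular), so both equalities $Sa=Sa^2$ and $Sa=Saa^*a$ really are needed there, exactly as in your main (non-parenthetical) computation.
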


\begin{proof} (i) $\Rightarrow$ (ii) It is proved.

(ii) $\Rightarrow$ (i) Suppose that $aa^*$ is left invertible along $a$. It follows from Lemma \ref{left mary} that $a\leq_\mathcal{L}a^2a^*a$, which leads to $Sa=Saa^*a$.

Also, $a\leq_\mathcal{L}a^2a^*a$ means $a=ma^2a^*a$ for some $m\in S$. Moreover, $(ma^2)^*$ is a \{1,4\}-inverse of $a$ from Lemma \ref{MP representation1}. Therefore, $a=a(ma^2)^*a=a[(ma^2)^*a]^*=aa^*ma^2$, which yields $Sa=Sa^2$. So, $a$ is left g-MP invertible.

(i) $\Rightarrow$ (iii) Suppose that $a$ is left g-MP invertible. Then there exist $x,y\in S$ such that $a=xa^2=yaa^*a$. Hence, $a=yxa^2a^*a$. Take $b=yxa$. Then $baa^*a=a$ and $Sb\subseteq Sa$.

(iii) $\Rightarrow$ (i) From $Sb\subseteq Sa$, it follows that $b=na$ for some $n\in S$. Hence, $a=na^2a^*a$. Applying Lemma \ref{MP representation1}, we know that $(na^2)^*$ is a $\{1,4\}$-inverse of $a$. So, $a=a(na^2)^*a=a[(na^2)^*a]^*=aa^*na^2$, which implies $Sa=Sa^2$. Also, $baa^*a=a$ can conclude $Sa=Saa^*a$. Therefore, $a$ is left g-MP invertible.

We know that $yxa$ is a left inverse of $aa^*$ along $a$ for all $x$ and $y$ such that $a=xa^2=yaa^*a$. Indeed, $(yxa) aa^*a=y(xa^2)a^*a=yaa^*a=a$ and $yxa \leq_\mathcal{L} a$.

Similarly, one can check that $baa^*b$ is a left inverse of $aa^*$ along $a$.  \hfill $\Box$
\end{proof}

Dually, we obtain

\begin{theorem} \label{right g-MP} Let $S$ be a $*$-monoid and let $a\in S$. Then the following conditions are equivalent{\rm :}

\emph{(i)} $a$ is right g-MP invertible.

\emph{(ii)} $a^*a$ is right invertible along $a$.

\emph{(iii)} $aa^*ac=a$ and $cS\subseteq aS$ for some $c\in S$.

In this case, $ast$ and $ca^*ac$ are right inverses of $a^*a$ along $a$, for all $s$ and $t$ such that $a=a^2s=aa^*at$.
\end{theorem}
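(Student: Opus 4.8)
My plan is to obtain Theorem~\ref{right g-MP} as the formal dual of Theorem~\ref{left g-MP}. The quickest route is to apply Theorem~\ref{left g-MP} to the opposite $*$-monoid $S^{\mathrm{op}}$ (the map $*$ is still an involution on $S^{\mathrm{op}}$): there left ideals become right ideals, ``left invertible along $a$'' becomes ``right invertible along $a$'', the products $a^{2}$ and $aa^{*}a$ are unchanged, and the string $aa^{*}$ becomes $a^{*}a$, so that Theorem~\ref{left g-MP} read in $S^{\mathrm{op}}$ is exactly the statement above, with the witnesses $x,y,b$ relabelled $s,t,c$ and products written in reverse order. The version I would actually write out, however, is the hands-on one: I would copy the proof of Theorem~\ref{left g-MP} line by line, using Lemma~\ref{left mary}(ii), Theorem~\ref{right mary a} and Lemma~\ref{MP representation2} in place of their left-hand counterparts.

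Concretely, for $(i)\Rightarrow(ii)$ I would pick $s,t$ with $a=a^{2}s=aa^{*}at$ and replace the $a$ standing in front of $t$ by $a^{2}s$, getting $a=aa^{*}a^{2}st=\bigl(a(a^{*}a)a\bigr)(st)$, so $a\leq_{\mathcal{R}}a(a^{*}a)a$ and Lemma~\ref{left mary}(ii) gives that $a^{*}a$ is right invertible along $a$. For $(ii)\Rightarrow(i)$ I would run the same two-stage argument: Lemma~\ref{left mary}(ii) yields $a\leq_{\mathcal{R}}a(a^{*}a)a=aa^{*}a^{2}$, hence $aS=aa^{*}aS$; and writing $a=aa^{*}a^{2}m=aa^{*}a(am)$, Lemma~\ref{MP representation2} (with $y=am$) makes $(a^{2}m)^{*}$ a $\{1,3\}$-inverse of $a$, so the self-adjoint factor is $a(a^{2}m)^{*}=a^{2}ma^{*}$ and $a=a(a^{2}m)^{*}a=a^{2}ma^{*}a\in a^{2}S$, that is, $aS=a^{2}S$; combining, $a$ is right g-MP invertible.

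For the equivalence with $(iii)$ and the two explicit formulas I would take $c=ast$ in $(i)\Rightarrow(iii)$, noting $aa^{*}ac=aa^{*}a^{2}st=aa^{*}at=a$ (using $a^{2}s=a$) and $cS=a(st)S\subseteq aS$; in $(iii)\Rightarrow(i)$ I would use $cS\subseteq aS$ to write $c=an$, get $a=aa^{*}a^{2}n=aa^{*}a(an)$, apply Lemma~\ref{MP representation2} as above to land in $a^{2}S$, and note that $aS=aa^{*}aS$ is immediate from $aa^{*}ac=a$. Finally I would check the ``in this case'' clause by mirroring the two computations that close the proof of Theorem~\ref{left g-MP}: $a(a^{*}a)(ast)=aa^{*}a^{2}st=a$ with $ast\leq_{\mathcal{R}}a$, and $a(a^{*}a)(ca^{*}ac)=(aa^{*}ac)(a^{*}ac)=a(a^{*}ac)=a$ with $ca^{*}ac=a(na^{*}a^{2}n)\in aS$.

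I do not anticipate any genuine obstacle: the whole argument is a line-by-line transcription of Theorem~\ref{left g-MP}. The two places that need care are (a) consistently swapping left with right and reversing the order of products while leaving the involution in place, and (b) invoking Lemma~\ref{MP representation2} rather than Lemma~\ref{MP representation1}, since the cancellable factor produced by right invertibility of $a^{*}a$ along $a$ now sits to the right of $aa^{*}a$, so it is the form $a=aa^{*}ay$ that applies.
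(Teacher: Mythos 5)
Your proposal is correct and matches the paper's intent exactly: the paper offers no proof of this theorem, simply stating ``Dually, we obtain'' after Theorem~\ref{left g-MP}, and your line-by-line dualization (swapping left/right ideals, reversing products, and invoking Lemma~\ref{left mary}(ii) and Lemma~\ref{MP representation2} in place of their left-hand counterparts) is precisely the argument being appealed to. All the computations you spell out, including the use of the $\{1,3\}$-property to get $a(a^{2}m)^{*}=a^{2}ma^{*}$ and hence $a\in a^{2}S$, check out.
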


We next consider some relations among left and right g-MP invertibilities and EP elements.

\begin{theorem} Let $S$ be a $*$-monoid and let $a\in S$. Then the following conditions are equivalent{\rm :}

\emph{(i)} $a$ is {\rm EP}.

\emph{(ii)} $a$ is left g-MP invertible and $aS=a^*S$.

\emph{(iii)} $a$ is right g-MP invertible and $aS=a^*S$.
\end{theorem}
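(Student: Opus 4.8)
The plan is to reduce the theorem to two facts already recorded in the excerpt: the characterization ``$a$ is EP $\Leftrightarrow$ $a\in S^\dag$ and $aa^\dag=a^\dag a$'', and the remark after Corollary~\ref{left right MP} that $a$ is left (resp.\ right) g-MP invertible exactly when $a$ is Moore--Penrose invertible and left (resp.\ right) regular. Given these, it suffices to prove (i)$\Leftrightarrow$(ii); the equivalence (i)$\Leftrightarrow$(iii) is obtained by the identical computation, since right g-MP invertibility also entails $a\in S^\dag$ and the ideal condition $aS=a^*S$ is the same in both statements.

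For (i)$\Rightarrow$(ii): if $a$ is EP then $a\in S^\#\cap S^\dag$, hence $a$ is left regular (group invertibility gives left and right regularity) and Moore--Penrose invertible, so $a$ is left g-MP invertible. To get $aS=a^*S$, set $p:=aa^\dag=a^\dag a$; then $p^*=p$ and $pa=ap=a$, so taking adjoints $pa^*=a^*p=a^*$. Hence $a^*=pa^*=aa^\dag a^*\in aS$, while $p=(a^\dag a)^*=a^*(a^\dag)^*$ gives $a=pa=a^*(a^\dag)^*a\in a^*S$; thus $aS=a^*S$.

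For (ii)$\Rightarrow$(i): from left g-MP invertibility, $a\in S^\dag$. The Penrose equations give the standard identities $aS=aa^\dag S$ and $a^*S=a^\dag aS$ (for the second: $a^\dag a=(a^\dag a)^*=a^*(a^\dag)^*\in a^*S$, and $a^*=(a^\dag a)a^*\in a^\dag aS$ because $a(a^\dag a)=a$ and $(a^\dag a)^*=a^\dag a$). So the hypothesis $aS=a^*S$ becomes $aa^\dag S=a^\dag aS$. Since $e:=aa^\dag$ and $f:=a^\dag a$ are self-adjoint idempotents, $eS=fS$ forces $fe=e$ and $ef=f$; taking adjoints of $fe=e$ and using $e^*=e$, $f^*=f$ yields $ef=e$, whence $e=f$, i.e.\ $aa^\dag=a^\dag a$. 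Therefore $a$ is EP. The argument for (iii)$\Rightarrow$(i) is word for word the same, right g-MP invertibility again providing $a\in S^\dag$.

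The only real care needed is keeping the sidedness straight — proving $a^*S=a^\dag aS$ rather than the ``wrong-handed'' $Sa^*=Sa^\dag a$, and ordering the self-adjoint-idempotent step correctly. Beyond that each step is a one-line manipulation of the four Penrose identities, so I do not expect a genuine obstacle.
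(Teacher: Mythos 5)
Your proof is correct, but it takes a genuinely different route from the paper's. The paper first proves (ii)$\Leftrightarrow$(iii) directly (using $aS=aa^*(a^\dag)^*S\subseteq aa^*S=a^2S$ and the symmetry of the condition $aS=a^*S$), observes that (ii) together with (iii) yields $a\in S^\#\cap S^\dag$, and then closes the argument by citing an external characterization (Patr\'{\i}cio--Puystjens): $a$ is EP if and only if $aS=a^*S$ and $a\in S^\dag$. You instead prove (i)$\Leftrightarrow$(ii) and (i)$\Leftrightarrow$(iii) directly from the Penrose equations, in effect reproving the needed half of that cited result: you translate $aS=a^*S$ into $aa^\dag S=a^\dag aS$ via the identities $aS=aa^\dag S$ and $a^*S=a^\dag aS$, and then use the standard fact that two self-adjoint idempotents generating the same right ideal are equal to conclude $aa^\dag=a^\dag a$. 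All the individual steps check out (in particular the idempotent argument $fe=e$, $ef=f$, then adjoints give $ef=e$, so $e=f$), and you correctly use the remark after Corollary~2.6 that left (right) g-MP invertibility is equivalent to left (right) regularity plus Moore--Penrose invertibility. Your version buys self-containedness at the cost of a slightly longer computation; the paper's version is shorter and additionally records the direct equivalence of (ii) and (iii) without passing through (i).
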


\begin{proof} (ii) $\Rightarrow$ (iii) Suppose that $a$ is left g-MP invertible. Then $a$ is Moore-Penrose invertible. Hence, $aS=aa^*(a^\dag)^*S \subseteq aa^*S=a^2S$ since $aS=a^*S$. Therefore, $a$ is right g-MP invertible.

(iii) $\Rightarrow$ (ii) By noting that $aS=a^*S$ implies $Sa=Sa^*$.

Note that the condition (ii) or (iii) implies $a\in S^\#$. The result follows. Indeed, from \cite[Corollary 3]{Patricio and Puystjens}, it is known that $a$ is EP if and only if $aS=a^*S$ and $a\in S^\#$ if and only if $aS=a^*S$ and $a\in S^\dag$.   \hfill$\Box$
\end{proof}

It is well known that the reverse order law holds for the classical inverse in $S$.
More precisely, $(ab)^{-1}$ = $b^{-1}a^{-1}$ for any invertible elements $a$ and $b$ in $S$.
However, $(ab)^{\parallel d}$ = $b^{\parallel d}a^{\parallel d}$ does not hold in general in $S$.
For instance, in the semigroup of 2 by 2 complex matrices, take
$a^2=a=(\begin{smallmatrix} 1 & 0 \\ 1 & 0 \end{smallmatrix})$ and $d=(\begin{smallmatrix} 1 & 1 \\ 0 & 0 \end{smallmatrix})$, then $a^{\parallel d}=(\begin{smallmatrix} \frac{1}{2} & \frac{1}{2} \\ 0 & 0 \end{smallmatrix})$. However, $(a^2)^{\parallel d}=(\begin{smallmatrix} \frac{1}{2} & \frac{1}{2} \\ 0 & 0 \end{smallmatrix}) \neq (\begin{smallmatrix} \frac{1}{4} & \frac{1}{4} \\ 0 & 0 \end{smallmatrix})=a^{\parallel d} a^{\parallel d}$.

We remark that $a$ is invertible along $d$ and $ad=da$ imply that $d$ is group invertible in $S$. Indeed, it is known that $a^{\parallel d}$ exists if and only if $d \leq_\mathcal{H} dad$. As $ad=da$, then $d \leq_\mathcal{H} dad$ implies $d \in d^2S \cap Sd^2$. Hence, $d\in S^\#$.

Next, we consider the reverse order law for the inverse along an element under commutativity condition.

\begin{lemma} {\rm \cite[Theorem 10]{Mary}} Let $a,d\in S$ with $ad=da$. If $a^{\parallel d}$ exists, then $a^{\parallel d}$ commutes with $a$ and $d$.
\end{lemma}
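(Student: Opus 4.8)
The plan is to pass to the corner of $S$ determined by the idempotent $dd^\#$ and do the computation there. Write $b=a^{\parallel d}$; by definition $bad=d$, $dab=d$, and $b\leq_\mathcal{L}d$, $b\leq_\mathcal{R}d$, so $b=pd=dq$ for some $p,q\in S$. The remark immediately preceding the lemma already gives $d\in S^\#$ (it uses $ad=da$), so I would set $e:=dd^\#=d^\#d$, an idempotent satisfying $ed=de=d$. From $b=dq$ and $b=pd$ one gets at once $eb=(ed)q=dq=b$ and $be=p(de)=pd=b$, hence $eb=be=b$.

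The core of the argument is to prove $ab=ba=e$, and this is the step that needs a little ingenuity. First I would record the two one-sided absorption identities $ba=(ba)e$ and $ab=e(ab)$: for the former, $ba=(pd)a=(pa)d=(pa)(de)=((pa)d)e=(ba)e$, using $ad=da$ and $d=de$, and the latter is the mirror computation based on $b=dq$ and $d=ed$. Independently, right-multiplying $bad=d$ by $d^\#$ gives $bae=e$, and left-multiplying $dab=d$ by $d^\#$ gives $eab=e$. Combining the two sides, $ba=(ba)e=bae=e$ and $ab=e(ab)=eab=e$; in particular $ab=ba$, so $a^{\parallel d}$ commutes with $a$.

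It remains to check commutativity with $d$. From $bad=d$ and $ad=da$ I would get $(bd)a=b(da)=b(ad)=bad=d$. Right-multiplying this by $b$ and using $ab=e$ gives $(bd)e=db$, and since $de=d$ the left-hand side collapses to $bd$; hence $bd=db$, which completes the proof.

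The one step I expect to be the real obstacle is exactly the passage to $e=dd^\#$ and the isolation of the absorption identities $ba=(ba)e$, $ab=e(ab)$ — once those are available, everything else is routine manipulation of $bad=d=dab$ under $ad=da$. (A pleasant feature of this approach is that, unlike the most obvious attack, it never uses the fact that $a$ commutes with $d^\#$.)
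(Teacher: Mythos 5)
Your proof is correct. Every step checks out: the reduction to the idempotent $e=dd^\#$ is legitimate because the remark preceding the lemma (which only uses $ad=da$ and the existence of $a^{\parallel d}$) does give $d\in S^\#$; the absorption identities $ba=(ba)e$ and $ab=e(ab)$ follow exactly as you compute from $b=pd=dq$ and $ad=da$; combining them with $bae=e$ and $eab=e$ (obtained by multiplying $bad=d$ and $dab=d$ by $d^\#$) yields $ab=ba=e$; and the final manipulation $bd=(bd)(ab)=bdab=db$ via $bda=d$ is sound.

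Note, however, that the paper offers no proof to compare against: this lemma is quoted verbatim from Mary's paper (Theorem 10 of the cited reference), whose original argument is couched in the language of Green's relations, viewing $a^{\parallel d}$ as the group inverse of an element in the group $\mathcal{H}$-class of $d$. Your argument is a genuinely different, self-contained route: a direct equational verification inside the corner determined by $dd^\#$. What it buys is independence from the $\mathcal{H}$-class machinery — everything is reduced to associativity, the two defining identities $bad=d=dab$, the factorizations $b=pd=dq$, and the single structural fact $d\in S^\#$ already recorded in the paper. The identity $a^{\parallel d}a=aa^{\parallel d}=dd^\#$ that you establish along the way is itself a useful byproduct (it is in effect the statement $1^{\parallel d}=dd^\#$ mentioned elsewhere in the paper), and your observation that commutation of $a$ with $d^\#$ is never needed is accurate.
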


\begin{theorem} Let $a,b,d \in S$ with $ad=da$. If $a^{\parallel d}$ and $b^{\parallel d}$ exist, then

\emph{(i)} $(ab)^{\parallel d}$ exists and $(ab)^{\parallel d}=b^{\parallel d} a^{\parallel d}$.

\emph{(ii)} $(ba)^{\parallel d}$ exists and $(ba)^{\parallel d}=a^{\parallel d} b^{\parallel d}$.
\end{theorem}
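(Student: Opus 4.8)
The plan is to check, by direct computation, that $xy$ is the inverse of $ba$ along $d$ (part~(ii)) and that $yx$ is the inverse of $ab$ along $d$ (part~(i)), where throughout I set $x=a^{\parallel d}$ and $y=b^{\parallel d}$. The ingredients are: from the very definition of the inverse along an element, $xad=d=dax$ with $x\leq_{\mathcal H}d$, and $ybd=d=dby$ with $y\leq_{\mathcal H}d$; and, decisively, the preceding lemma, which because $ad=da$ guarantees that $x$ commutes with $a$ and with $d$. Thus $a$, $x$, $d$ commute pairwise, so a word in these three letters may be reordered freely, and in particular $dax=axd=xda=\dots=d$. It is worth emphasising that no such commutativity is available for $b$ or $y$, since only $ad=da$ is assumed.

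For part~(ii): that $xy\leq_{\mathcal H}d$ is immediate, since $Sxy\subseteq Sy\subseteq Sd$ and $xyS\subseteq xS\subseteq dS$ by $y\leq_{\mathcal L}d$ and $x\leq_{\mathcal R}d$. For the identity $(xy)(ba)d=d$ I would slide $a$ rightwards past $d$ via $ad=da$ and then invoke $ybd=d$ and the commuting trio: $(xy)(ba)d=xy\,b\,(ad)=xy\,b\,(da)=x\,(ybd)\,a=xda=d$. For the identity $d(ba)(xy)=d$, where simply commuting $b$ or $y$ leads nowhere, I would first replace $y$ by $dv$ (possible since $y\leq_{\mathcal R}d$, so $y\in dS$): then $d(ba)(xy)=(dbax\,d)\,v=(db\,(axd))\,v=(dbd)\,v=db\,(dv)=dby=d$, using $axd=d$. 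The three conditions being verified, $(ba)^{\parallel d}=xy=a^{\parallel d}b^{\parallel d}$.

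Part~(i) is the mirror image. Again $yx\leq_{\mathcal H}d$ is immediate. For $d(ab)(yx)=d$ I would slide $a$ leftwards past $d$ and use $dby=d$: $d(ab)(yx)=(da)\,b\,yx=(ad)\,b\,yx=a\,(dby)\,x=adx=d$. For $(yx)(ab)d=d$ I would replace $y$ by $ud$ (possible since $y\leq_{\mathcal L}d$, so $y\in Sd$): then $(yx)(ab)d=u\,(d\,xa\,b\,d)=u\,(dxa)\,bd=u\,(dbd)=(ud)\,bd=ybd=d$, using $dxa=d$. Hence $(ab)^{\parallel d}=yx=b^{\parallel d}a^{\parallel d}$.

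The only step that needs real care — and the one where the asymmetry of the hypothesis ($ad=da$ but not $bd=db$) makes itself felt — is the pair $d(ba)(xy)=d$ and $(yx)(ab)d=d$. A naive push of $b$ (or of $y$) past a neighbour stalls; the correct move is instead to absorb one copy of $d$ out of $y$, writing $y=dv$ when the stray $d$ should sit to the right and $y=ud$ when it should sit to the left. Once this is done, the commuting triple $a,x,d$ collapses $axd$ (respectively $dxa$) to $d$, and then $dby=d$ (respectively $ybd=d$) closes the computation. Everything else is routine bookkeeping.
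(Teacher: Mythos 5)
Your proof is correct and follows essentially the same route as the paper: both arguments invoke the lemma that $a^{\parallel d}$ commutes with $a$ and $d$ when $ad=da$, factor $b^{\parallel d}$ through $d$ (as $ud$ or $dv$) at the one step where no commutativity is available for $b$, and then verify the three defining conditions directly. The only difference is cosmetic: you write out both parts, where the paper proves (i) and declares (ii) symmetric.
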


\begin{proof} (i) Since $b^{\parallel d}$ can be written as $yd$ for some $y\in S$, we have

$b^{\parallel d}a^{\parallel d}abd=b^{\parallel d}aa^{\parallel d}bd=y(daa^{\parallel d})bd=ydbd=b^{\parallel d}bd=d$ and $dabb^{\parallel d} a^{\parallel d}=adbb^{\parallel d} a^{\parallel d}=ada^{\parallel d}=daa^{\parallel d}=d$.

As $a^{\parallel d}\leq_\mathcal{H} d$ and $b^{\parallel d}\leq_\mathcal{H} d$, then $b^{\parallel d} a^{\parallel d} \leq_\mathcal{H} d$.

Therefore, $ab$ is invertible along $d$ and $(ab)^{\parallel d}=b^{\parallel d} a^{\parallel d}$.

(ii) can be proved similarly. \hfill$\Box$
\end{proof}

We next consider the existence criteria and formulae of left inverse of the product of triple elements along an element.

\begin{theorem} \label{left mary product} Let $a,b,d\in S$. If $a$ is left invertible along $d$, then the following conditions are equivalent{\rm :}

\emph{(i)} $b$ is left invertible along $d$.

\emph{(ii)} $adb$ is left invertible along $d$.

In this case, $(adb)_l^{\parallel d}=b_l^{\parallel d} bya_l^{\parallel d}$, where $y\in S$ satisfies $d=ydbd$.
\end{theorem}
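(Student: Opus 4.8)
The plan is to work with the characterization of left invertibility along an element from Lemma~\ref{left mary}(i), namely that $c$ is left invertible along $d$ if and only if $d\leq_\mathcal{L}dcd$. So throughout I would translate each of the three statements into a Green's-$\mathcal{L}$ relation: $a$ left invertible along $d$ means $d\leq_\mathcal{L}dad$; statement (i) means $d\leq_\mathcal{L}dbd$; statement (ii) means $d\leq_\mathcal{L}d(adb)d=dadbd$. The global hypothesis gives an element $y\in S$ with $d=ydbd$ — wait, more precisely $d=ydad$ since it is $a$, not $b$, that is left invertible along $d$; I would fix $y$ with $d=ydad$, and I would also fix the left inverses $a_l^{\parallel d}$ and (once (i) is assumed) $b_l^{\parallel d}$, recalling that each is $\leq_\mathcal{L}d$, so $a_l^{\parallel d}=ud$ and $b_l^{\parallel d}=vd$ for suitable $u,v\in S$, and that $a_l^{\parallel d}ad=d$, $b_l^{\parallel d}bd=d$.

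For the direction (i)$\Rightarrow$(ii), I would simply exhibit the candidate formula and verify it. Set $c:=b_l^{\parallel d}bya_l^{\parallel d}$, where $y$ satisfies $d=ydad$ (I expect a typo in the displayed formula: it should read $d=ydad$, matching the hypothesis that $a$ — not $b$ — is left invertible along $d$). I would check $c(adb)d=d$ by substituting and peeling off from the right: $c\,adb\,d = b_l^{\parallel d}b\,y\,(a_l^{\parallel d}ad)\,bd = b_l^{\parallel d}b\,y\,d\,bd$; now I need $ydbd$ — hmm, but $y$ was chosen so that $ydad=d$, so instead I should read off $b_l^{\parallel d}byd\cdot bd$ and use that $yd$ is not directly $d$. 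Let me instead choose $y$ with $d=ydbd$ after (i) is in hand (this is exactly statement (i)), and separately use the hypothesis on $a$ only to produce $a_l^{\parallel d}$. Then $c(adb)d=b_l^{\parallel d}b\,y\,(a_l^{\parallel d}ad)\,bd=b_l^{\parallel d}b\,(ydbd)=b_l^{\parallel d}b\,d=d$, as desired. For the $\mathcal{L}$-condition, $c=b_l^{\parallel d}bya_l^{\parallel d}$ and $a_l^{\parallel d}\leq_\mathcal{L}d$, hence $c\leq_\mathcal{L}d$. That gives (ii) together with the claimed formula.

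For (ii)$\Rightarrow$(i), I would argue purely with Green's preorders: statement (ii) is $d\leq_\mathcal{L}dadbd$, and trivially $dadbd\leq_\mathcal{L}dbd$ (write $dadbd=(dad)\cdot bd$, so $dadbd=(da)\,dbd$, exhibiting it as a left multiple of $dbd$). Transitivity of $\leq_\mathcal{L}$ then yields $d\leq_\mathcal{L}dbd$, which by Lemma~\ref{left mary}(i) is exactly (i). Note this direction uses nothing about $a$ being left invertible along $d$; the hypothesis on $a$ is only needed to make the formula in the "in this case" clause meaningful and to get the nontrivial implication (i)$\Rightarrow$(ii).

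The main obstacle I anticipate is bookkeeping: getting the roles of $a$ and $b$ and the defining equation for $y$ consistent, since the displayed formula and the hypothesis appear to clash on whether $y$ satisfies $d=ydad$ or $d=ydbd$. I would resolve this by committing early to: $y$ is chosen from statement (i), i.e. $d=ydbd$; the factor $a_l^{\parallel d}$ comes from the standing hypothesis; and then the verification $c(adb)d=d$ goes through cleanly by the one-line reduction above. Everything else is routine substitution, with no delicate uniqueness or cancellation arguments required.
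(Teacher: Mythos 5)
Your proposal is correct and follows essentially the same route as the paper: (ii)$\Rightarrow$(i) via transitivity of $\leq_\mathcal{L}$ applied to $dadbd=(da)\cdot dbd$, and (i)$\Rightarrow$(ii) together with the formula via the computation $b_l^{\parallel d}by(a_l^{\parallel d}ad)bd=b_l^{\parallel d}b(ydbd)=b_l^{\parallel d}bd=d$, which is exactly the paper's verification. Your resolution of the bookkeeping issue is also the right one — $y$ is indeed chosen with $d=ydbd$ as in the statement, and the hypothesis on $a$ enters only through the factor $a_l^{\parallel d}$.
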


\begin{proof} (i) $\Rightarrow$ (ii) Suppose that $b$ is left invertible along $d$. Then $d\leq_\mathcal{L} dbd$ by Lemma \ref{left mary}, i.e., $d=ydbd$ for some $y\in S$. Also, as $a$ is left invertible along $d$, then $d=xdad$ for some $x\in S$. Hence, $d=y(xdad)bd$ and so $d\leq_\mathcal{L} dadbd$. Therefore, $adb$ is left invertible along $d$ from Lemma \ref{left mary}.

(ii) $\Rightarrow$ (i) As $adb$ is left invertible along $d$, then $d\leq_\mathcal{L} d(adb)d$, which implies $d\leq_\mathcal{L} dbd$. Again, Lemma \ref{left mary} guarantees that $b$ is left invertible along $d$.

We next show that $b_l^{\parallel d} bya_l^{\parallel d}$ is a left inverse of $adb$ along $d$ for all $y\in S$ such that $d=ydbd$. Indeed, we have
$(b_l^{\parallel d} bya_l^{\parallel d}) adbd=b_l^{\parallel d} bydbd=b_l^{\parallel d} bd=d$
and $b_l^{\parallel d} bya_l^{\parallel d} \leq_\mathcal{L} d$ since $a_l^{\parallel d} \leq_\mathcal{L} d$.   \hfill$\Box$
\end{proof}

Dually, it follows that

\begin{theorem} \label{right mary product} Let $a,b,d\in S$. If $a$ is right invertible along $d$, then the following conditions are equivalent{\rm :}

\emph{(i)} $b$ is right invertible along $d$.

\emph{(ii)} $bda$ is right invertible along $d$.

In this case, $(bda)_r^{\parallel d}=a_r^{\parallel d} tbb_r^{\parallel d}$, where $t\in S$ satisfies $d=dbdt$.
\end{theorem}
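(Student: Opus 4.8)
The statement is the dual of Theorem~\ref{left mary product}, so the plan is to mirror that proof, reading every product in the reverse order (equivalently, running the argument in the opposite monoid $S^{\mathrm{op}}$): $\leq_{\mathcal L}$ is replaced by $\leq_{\mathcal R}$, Lemma~\ref{left mary}(i) by Lemma~\ref{left mary}(ii), the element $adb$ by $bda$, and the auxiliary relation $d=ydbd$ by its reverse $d=dbdt$ (the role of $y$ now taken by $t$, placed on the right).

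For (i)$\Rightarrow$(ii): from $b$ right invertible along $d$, Lemma~\ref{left mary}(ii) gives $d=dbdt$ for some $t\in S$, and from $a$ right invertible along $d$ it gives $d=dadu$ for some $u\in S$. Substituting the second identity for the middle occurrence of $d$ in the first, $d=db(dadu)t=\big(d(bda)d\big)ut$, so $d\leq_{\mathcal R}d(bda)d$, and Lemma~\ref{left mary}(ii) shows that $bda$ is right invertible along $d$. For (ii)$\Rightarrow$(i): from $d\leq_{\mathcal R}d(bda)d=dbd(ad)$ one reads off $d\leq_{\mathcal R}dbd$, so $b$ is right invertible along $d$ by Lemma~\ref{left mary}(ii).

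For the formula I would fix $t$ with $d=dbdt$ and put $X:=a_r^{\parallel d}tbb_r^{\parallel d}$. Then $X\leq_{\mathcal R}d$ because $b_r^{\parallel d}\leq_{\mathcal R}d$, and the remaining requirement is checked by the computation $d(bda)X=dbda\,a_r^{\parallel d}tbb_r^{\parallel d}=db\,(daa_r^{\parallel d})\,tbb_r^{\parallel d}$, then collapsing $daa_r^{\parallel d}$ through the defining relation of the right inverse $a_r^{\parallel d}$, then using $d=dbdt$, and finally collapsing the trailing $dbb_r^{\parallel d}$ through the defining relation of $b_r^{\parallel d}$; this is the exact mirror of the chain $(b_l^{\parallel d}bya_l^{\parallel d})adbd=b_l^{\parallel d}bydbd=b_l^{\parallel d}bd=d$ used in the proof of Theorem~\ref{left mary product}.

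I do not expect a genuine obstacle, since Lemma~\ref{left mary} already supplies the two-sided form of the key equivalence; the only thing to watch is the left/right bookkeeping — correctly reversing $d=ydbd$ to $d=dbdt$ and applying the one-sided relations of $a_r^{\parallel d}$ and $b_r^{\parallel d}$ on the right side rather than the left — which is precisely where a careless dualization would slip.
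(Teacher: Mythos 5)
Your overall plan is exactly what the paper intends: Theorem~\ref{right mary product} is stated with no proof beyond the word ``Dually,'' and your mirror of the proof of Theorem~\ref{left mary product} via Lemma~\ref{left mary}(ii) is the right dualization. The equivalence (i)$\Leftrightarrow$(ii) and the computation $d(bda)\,a_r^{\parallel d}tbb_r^{\parallel d}=db\,(daa_r^{\parallel d})\,tbb_r^{\parallel d}=dbdt\cdot bb_r^{\parallel d}=dbb_r^{\parallel d}=d$ are all correct (using the intended relations $daa_r^{\parallel d}=d$ and $dbb_r^{\parallel d}=d$).

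There is one slip, and it is precisely the left/right bookkeeping you warned yourself about: you justify $X:=a_r^{\parallel d}tbb_r^{\parallel d}\leq_{\mathcal R}d$ by citing $b_r^{\parallel d}\leq_{\mathcal R}d$. Since $X\leq_{\mathcal R}d$ means $X\in dS$, i.e.\ $X=dy$ for some $y$, it is the \emph{leftmost} factor that must absorb into $dS$; writing $b_r^{\parallel d}=ds$ only places $X$ in $SdS$. The correct justification is $a_r^{\parallel d}\leq_{\mathcal R}d$, so $a_r^{\parallel d}=dz$ and hence $X=dztbb_r^{\parallel d}\in dS$ --- the exact mirror of the paper's ``$b_l^{\parallel d}bya_l^{\parallel d}\leq_{\mathcal L}d$ since $a_l^{\parallel d}\leq_{\mathcal L}d$,'' where in both versions it is the inverse of $a$ along $d$, not that of $b$, that supplies the bound. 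The conclusion you need is true and the fix is one line, but as written the justification does not establish it.
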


Next, we present two lemmas which play an important role in the proof of Theorem \ref{mary product} below, which gives equivalences among the inverses of $b$, $adb$ and $bda$ along $d$, under certain conditions. The symbol $S^{-1}$ denotes the set of all invertible elements in $S$.

\begin{lemma} \label{chen 1} Let $e^2=e\in S$ and let $a\in S$. Then $a$ is invertible along $e$ if and only if $eae \in (eSe)^{-1}$.
\end{lemma}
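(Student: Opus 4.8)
\textbf{Proof proposal for Lemma~\ref{chen 1}.}

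The plan is to reduce the statement to an elementary corner-ring computation inside $eSe$, using the characterization of invertibility along an element in terms of Green's $\mathcal{H}$-preorder recalled in the introduction (namely $a^{\parallel e}$ exists $\iff e \leq_\mathcal{H} eae$). First I would unwind the meaning of $e \leq_\mathcal{L} eae$ and $e \leq_\mathcal{R} eae$ when $e$ is idempotent: since $e$ acts as a two-sided identity on the monoid $eSe$, writing $e = x\,eae$ with $x \in S$ and then replacing $x$ by $exe \in eSe$ (because $e = e\cdot e = e x\,eae$, so one may left-multiply by $e$ and insert $e$ freely) shows $e \leq_\mathcal{L} eae$ is equivalent to the existence of a left inverse of $eae$ inside the monoid $eSe$. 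Dually $e \leq_\mathcal{R} eae$ is equivalent to $eae$ having a right inverse in $eSe$.

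Next I would assemble the two directions. For ($\Leftarrow$): if $eae \in (eSe)^{-1}$, let $b$ be its two-sided inverse in $eSe$, so $b = ebe$ and $b\,eae = e = eae\,b$. Then $b$ witnesses $e \leq_\mathcal{H} eae$ (indeed $b = ebe = b(eae)b \leq_\mathcal{L} e$ and $\leq_\mathcal{R} e$ trivially since $b \in eSe$, and $b\cdot ae\cdot e = bea e = ?$)—here one must be a little careful to get $bae = e$ rather than just $b\,eae = e$; but since $b \in eSe$ we have $b = be$, hence $bae\cdot e$ relates to $b\,eae$ after inserting $e$, and more directly one checks $b$ is the inverse of $a$ along $e$ by verifying $bae = b\,eae$... actually the cleanest route is: from $b\,eae = e$ and $b = eb$ we get $b a e = e a e$?—no. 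I would instead verify directly that $eae\,b = e$ and $b\,eae = e$ together with $b \leq_\mathcal{H} e$ give $b\,ae = (b\,ae)e = ?$. The honest statement is that one shows $bae = e$ by left-multiplying $eae\,b = e$ is not the right move; rather, set $b' = b$, note $b'ad$ with $d = e$ requires $bae = e$: from $b = be$ and $e = b\,eae = b e\,ae = b\,(eae)$, and $b(eae) = (bea)e = (be)(ae)$, so $e = b(ae)$ if $be = b$, which holds. Thus $bae = e$, and symmetrically $eab = b$ needs $eab = eae\,b$?—again $ea b = e a (eb) = (eae)b \cdot$(inserting $e$), giving $eab = (eae)b = e$... wait we need $eab = b$ not $e$. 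Here I would use $b = eb$ and $b = b\,eae\,b$ (which follows by sandwiching the two inverse relations), so $b = b(eae)b = (b\,eae)b = eb = b$, consistent, and $dab = eab = e a e b$ after inserting... Let me not grind: the correct clean verification is $b$ is the inverse of $a$ along $e$ because $b\,ae = b\,eae = e$ and $ea\,b = eae\,b = e$... these give $e$, and since the defining equations for "inverse along $e$" are $bae = e = eab$, I must re-examine: they are $bad = d$ and $dab = b$. With $d = e$: $bae = e$ (got it) and $eab = b$. For the second, $eab = ea(eb) = (eae)b$ only after inserting $e$ between $a$ and $b$, which is NOT automatic. So instead: $b = ebe$, hence $eab = ea(ebe) = (eae)(be) = e\cdot be = b$, using $eae\,b = e$ and $be = b$. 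Good—that works, inserting $e$ legitimately because $b = ebe$. So both defining equations hold, and $b \leq_\mathcal{H} e$ since $b = ebe \in eSe$ gives $b = e\cdot b \leq_\mathcal{L} e$ and $b = b\cdot e \leq_\mathcal{R} e$. Hence $a$ is invertible along $e$ with $a^{\parallel e} = b$.

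For ($\Rightarrow$): if $a$ is invertible along $e$, set $b = a^{\parallel e}$; then $b \leq_\mathcal{H} e$ forces $b \in eSe$ (from $b = xe = ey$ one gets $b = eb = be = ebe$), and the defining relations $bae = e = eab$ yield, after multiplying by $e$ and using $b = ebe$, that $(eae)b = e = b(eae)$ with $b \in eSe$; thus $eae$ is invertible in $eSe$. The main obstacle I anticipate is precisely this bookkeeping with the idempotent $e$: one must repeatedly justify inserting or deleting factors of $e$, which is legitimate only when the relevant element already lies in $eSe$, so the order of operations (establish membership in the corner, then compute) matters and is easy to get wrong. Everything else is a routine translation through Lemma~\ref{left mary} and the $\mathcal{H}$-characterization.
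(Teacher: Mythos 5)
Your overall route is the same as the paper's: reduce via the $\mathcal{H}$-characterization ($a^{\parallel e}$ exists iff $e\leq_\mathcal{H}eae$, i.e.\ $Se=Seae$ and $eS=eaeS$) to one-sided invertibility of $eae$ in the corner monoid $eSe$, where $e$ is the identity, so that having both a left and a right inverse there means $eae\in(eSe)^{-1}$. The paper's proof is exactly this chain of equivalences, and your first paragraph (replacing $x$ by $exe$ to land in $eSe$) already constitutes a complete proof; nothing more is needed.

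The trouble is in your added ``direct verification'' of the ($\Leftarrow$) direction, where you talk yourself into the wrong defining equations. The paper's definition of the two-sided inverse along $d$ is $bad=d=dab$ with $b\leq_\mathcal{H}d$ (both products equal $d$); the equation $dab=b$ you end up chasing belongs to the \emph{one-sided right} inverse along $d$. Consequently you try to prove $eab=b$, which is false in general, and the step $(eae)(be)=e\cdot be$ you use to get it is not valid: since $b=be$ one has $(eae)(be)=(eae)b=e$, not $b$. With the correct target the check is two lines: for $b\in eSe$ with $b(eae)=e=(eae)b$, using $b=be=eb$ one gets $bae=b(eae)=e$ and $eab=(eae)b=e$, and $b=ebe$ gives $b\leq_\mathcal{H}e$. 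So the lemma is fine and your main reduction is sound, but as written the verification paragraph contains both a misquoted definition and an incorrect identity, and should be deleted or replaced by the computation above.
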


\begin{proof} It is known that $a$ is invertible along $e$ if and only if $Se=Seae$ and $eS=eaeS$ if and only if $eSe=eSe(eae)=(eae)eSe$ if and only if $eae\in (eSe)^{-1}$. \hfill$\Box$
\end{proof}

\begin{lemma} \label{chen 2} Let $a,d\in S$ and let $d$ be regular with an inner inverse $d^{(1)}$. Then the following conditions are equivalent{\rm :}

\emph{(i)} $a$ is invertible along $d$.

\emph{(ii)} $da$ is invertible along $dd^{(1)}$.

\emph{(iii)} $ad$ is invertible along $d^{(1)}d$.
\end{lemma}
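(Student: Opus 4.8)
The plan is to reduce everything to the idempotent case handled by Lemma \ref{chen 1}, using the two canonical idempotents $e = dd^{(1)}$ and $f = d^{(1)}d$ associated to a regular element $d$. The key observation is that $d = dd^{(1)}d = ede = dfd$, so $d$ factors through both $e$ and $f$, and moreover $ed = d = df$. I would first record the Green's-relation translation: $a$ is invertible along $d$ iff $d \leq_\mathcal{H} dad$, i.e. iff $Sd = Sdad$ and $dS = dadS$. The goal is to show each of (ii) and (iii) is equivalent to this.

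For (i) $\Leftrightarrow$ (ii): by Lemma \ref{chen 1}, $da$ is invertible along the idempotent $e = dd^{(1)}$ iff $e(da)e \in (eSe)^{-1}$, that is, iff $dd^{(1)} da\, dd^{(1)} = dad d^{(1)} \in (eSe)^{-1}$. So I need to show $dad\,d^{(1)} \in (eSe)^{-1}$ iff $d \leq_\mathcal{H} dad$. The map $x \mapsto xd^{(1)}$ should carry $eSe = dd^{(1)}Sdd^{(1)}$ onto itself appropriately, or more directly: one shows $Sd = Sdad$ together with $dS = dadS$ forces $dad\,d^{(1)}$ to be invertible in the local monoid $eSe$ with inverse built from $d (dad)^{\parallel d}$-type expressions, pre/post-multiplied by $d$ and $d^{(1)}$. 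Conversely, invertibility of $dad\,d^{(1)}$ in $eSe$ gives $u \in eSe$ with $u \cdot dad d^{(1)} = e = dd^{(1)}d\,d^{(1)}$ and $dadd^{(1)} \cdot u = e$; multiplying on the right by $d$ (resp. manipulating with $d^{(1)}d$) and using $ed = d$ recovers $Sd = Sdad$ and $dS = dadS$. The dual argument with $f = d^{(1)}d$ and Lemma \ref{chen 1} applied to $ad$ gives (i) $\Leftrightarrow$ (iii): here $f(ad)f = d^{(1)}dad d^{(1}d = d^{(1)}\,dad \in (fSf)^{-1}$, and the same bookkeeping applies on the other side.

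The main obstacle I expect is the careful verification that the ``local'' invertibility in $eSe$ (or $fSf$) is genuinely two-sided and that the idempotent $e$ (resp. $f$) really is the identity of that local monoid relative to the relevant products — i.e. keeping track of which of $ed = d$, $df = d$, $d^{(1)}d\,d^{(1)} = d^{(1)}$ is needed at each step, since $e$ and $f$ need not commute and $d^{(1)}$ is not canonical. A clean way to organize this is to note that left invertibility of $da$ along $e$ corresponds (via Lemma \ref{left mary}) to $e \leq_\mathcal{L} e(da)e$, hence to $Se = Sdad\,d^{(1)}$, and then to observe $Se = Sdd^{(1)} \Leftrightarrow$ (right-multiplying by $d$, and conversely by $d^{(1)}$) $Sd = S dad$; the $\mathcal{R}$-side is symmetric. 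Assembling the left and right pieces then yields the $\mathcal{H}$-statements, completing each equivalence. The formulae for the actual inverses, if wanted, drop out as $(da)^{\parallel e} = d\, a^{\parallel d} d^{(1)}$ and $(ad)^{\parallel f} = d^{(1)} a^{\parallel d} d$, which one can check directly once the existence equivalences are in place.
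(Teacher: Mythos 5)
Your ``clean way'' paragraph is exactly the paper's proof: translate invertibility of $da$ along the idempotent $e=dd^{(1)}$ into $Se=Se(da)e$ and $eS=e(da)eS$, note $e(da)e=dadd^{(1)}$, and pass back and forth to $Sd=Sdad$ and $dS=dadS$ by multiplying on the right by $d$ and by $d^{(1)}$ (using $dS=dd^{(1)}S$ and $d=dd^{(1)}d$); the preliminary detour through Lemma \ref{chen 1} and $(eSe)^{-1}$ is unnecessary. One slip in your closing remark: the inverse formula should be $(da)^{\parallel e}=a^{\parallel d}d^{(1)}$ rather than $d\,a^{\parallel d}d^{(1)}$ (with the latter, $c(da)e$ computes to $d^2d^{(1)}$, not $e$), though this does not affect the equivalences the lemma actually asserts.
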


\begin{proof} (i) $\Rightarrow$ (ii) Suppose that $a$ is invertible along $d$.  Then $dS = dadS$ and $Sd = Sdad$. As $dS=dd^{(1)}S$, then it follows that $dd^{(1)}S = dadd^{(1)}S=dd^{(1)}dadd^{(1)}S$ and $Sdd^{(1)} = Sdadd^{(1)}=Sdd^{(1)}dadd^{(1)}$. So, $da$ is invertible along $dd^{(1)}$.

(ii) $\Rightarrow$ (i) As $da$ is invertible along $dd^{(1)}$, then $dd^{(1)}S = dd^{(1)}dadd^{(1)}S=dadd^{(1)}S$ and $Sdd^{(1)} = Sdd^{(1)}da d d^{(1)}=Sdadd^{(1)}$. From $dS=dd^{(1)}S$, it follows that $dS = dadS$ and $Sd = Sdadd^{(1)}d=Sdad$, i.e., $a$ is invertible along $d$.

(i) $\Leftrightarrow$ (iii) It is similar to the proof of (i) $\Leftrightarrow$ (ii). \hfill$\Box$
\end{proof}

\begin{theorem} \label{mary product} Let $a,b,d\in S$. If $a$ is invertible along $d$, then the following conditions are equivalent{\rm :}

\emph{(i)} $b$ is invertible along $d$.

\emph{(ii)} $adb$ is invertible along $d$.

\emph{(iii)} $bda$ is invertible along $d$.

In this case, $(adb)^{\parallel d}=b^{\parallel d} d^{(1)}a^{\parallel d}$ and $(bda)^{\parallel d}=a^{\parallel d} d^{(1)}b^{\parallel d}$ for all choices $d^{(1)}\in d\{1\}$.
\end{theorem}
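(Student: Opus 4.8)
The plan is to establish the equivalences (i) $\Leftrightarrow$ (ii) and (i) $\Leftrightarrow$ (iii) by combining Lemma~\ref{chen 2} with a reduction to the idempotent case handled in Lemma~\ref{chen 1}. The crucial observation is that invertibility of $a$ along $d$ forces $d$ to be regular: indeed, $a$ invertible along $d$ means $d\leq_\mathcal H dad$, so $d=dadu=vdad$ for some $u,v\in S$, whence $d(adv)d = (dad)vd\cdot$... more directly, $d=dadu$ gives $d = d(adu)$ with $adu$ behaving as an inner inverse candidate; in any case it is standard (and used implicitly in the paper) that $a^{\parallel d}$ exists $\Rightarrow$ $d$ is regular, with $ad a^{\parallel d}$ and $a^{\parallel d}ad$ among its inner inverses. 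So fix $d^{(1)}\in d\{1\}$, set $e=dd^{(1)}$ and $f=d^{(1)}d$; these are idempotents.

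\textbf{Step 1 (reduce $b$ and $adb$ along $d$ to the idempotent $e$).} By Lemma~\ref{chen 2}, $b$ is invertible along $d$ iff $db$ is invertible along $e=dd^{(1)}$, and since $d(adb)=(dad)(d^{(1)}d)b\cdot$— better: apply Lemma~\ref{chen 2} directly to the element $adb$, giving that $adb$ is invertible along $d$ iff $d(adb)$ is invertible along $e$. Now by Lemma~\ref{chen 1}, $db$ is invertible along $e$ iff $e(db)e\in(eSe)^{-1}$, i.e. $dd^{(1)}dbdd^{(1)}=dbdd^{(1)}$... I would instead pass everything through Lemma~\ref{chen 1}: $b$ inv.\ along $d$ $\Leftrightarrow$ $db$ inv.\ along $e$ $\Leftrightarrow$ $edbe\in(eSe)^{-1}$, and $adb$ inv.\ along $d$ $\Leftrightarrow$ $d(adb)$ inv.\ along $e$ $\Leftrightarrow$ $e\,dadb\,e\in (eSe)^{-1}$. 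In the corner ring $eSe$ these two elements are $(edbe)$ and $(edade)(edbe)$, and $edade = e\,dad\,e$; the point is that $edade$ is \emph{invertible} in $eSe$ precisely because $a$ is invertible along $d$ (again by Lemmas~\ref{chen 2} and~\ref{chen 1} applied to $a$). Hence $edadbe\in(eSe)^{-1}\Leftrightarrow edbe\in(eSe)^{-1}$, which yields (i)~$\Leftrightarrow$~(ii).

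\textbf{Step 2 ((i) $\Leftrightarrow$ (iii)).} This is the mirror argument using $f=d^{(1)}d$ in place of $e$: $b$ inv.\ along $d$ $\Leftrightarrow$ $bd$ inv.\ along $f$ $\Leftrightarrow$ $fbdf\in(fSf)^{-1}$, and $bda$ inv.\ along $d$ $\Leftrightarrow$ $(bda)d$ inv.\ along $f$ $\Leftrightarrow$ $f\,bdad\,f\in(fSf)^{-1}$; since $fdadf$ is invertible in $fSf$ (as $a$ is invertible along $d$), the two conditions coincide.

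\textbf{Step 3 (the formulae).} Assume (i)--(iii) hold. I would verify directly that $x:=b^{\parallel d}d^{(1)}a^{\parallel d}$ is the inverse of $adb$ along $d$. Write $a^{\parallel d}=a^{\parallel d}ad a^{\parallel d}$, and recall $a^{\parallel d}ad=d\cdot(\text{something})$, precisely $d\leq_\mathcal H dad$ with the defining relations $a^{\parallel d}ad=d=dad\,a^{\parallel d}$ and $b^{\parallel d}bd=d=dbd\,b^{\parallel d}$; also $d\,d^{(1)}d=d$. Then
\begin{align*}
x\,(adb)\,d &= b^{\parallel d}d^{(1)}a^{\parallel d}adbd = b^{\parallel d}d^{(1)}(a^{\parallel d}ad)(d^{(1)}d)bd,
\end{align*}
wait—I must be careful to insert $d^{(1)}d$ legitimately; the clean route is $a^{\parallel d}adb d = $ (using $a^{\parallel d}ad\le_\mathcal L d$, say $a^{\parallel d}ad=zd$) $=zdbd$, and then $b^{\parallel d}d^{(1)}zdbd$; since $zd=a^{\parallel d}ad$ and $d=dd^{(1)}d$ one gets $d^{(1)}zd\cdot bd$ collapses after multiplying by $b^{\parallel d}$ on the left via $b^{\parallel d}bd=d$. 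Symmetrically $d(adb)x=d$, and $x=b^{\parallel d}d^{(1)}a^{\parallel d}\le_\mathcal H d$ because $b^{\parallel d}\le_\mathcal L d$ and $a^{\parallel d}\le_\mathcal R d$. Uniqueness of the inverse along $d$ then gives $(adb)^{\parallel d}=b^{\parallel d}d^{(1)}a^{\parallel d}$, independent of the choice of $d^{(1)}$; the formula for $(bda)^{\parallel d}$ is the mirror image.

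\textbf{The main obstacle} I anticipate is Step~3's bookkeeping: one must justify every insertion of $d^{(1)}d$ or $dd^{(1)}$ using only the relations $d\leq_\mathcal H dad$, $d\leq_\mathcal H dbd$ (equivalently the one-sided identities $a^{\parallel d}ad=d=dad a^{\parallel d}$ etc.) together with $dd^{(1)}d=d$, and then check the answer does not depend on $d^{(1)}$ — which follows a posteriori from uniqueness but should be seen to be consistent. A secondary subtlety in Steps~1--2 is confirming that "$edade$ invertible in $eSe$" genuinely follows from $a^{\parallel d}$ existing; this is exactly the content of Lemma~\ref{chen 2} ($a$ inv.\ along $d$ $\Rightarrow$ $da$ inv.\ along $e$) combined with Lemma~\ref{chen 1}, but one should note $e\,da\,e = dd^{(1)}dadd^{(1)} = dadd^{(1)} = e\cdot dad\cdot$\dots so the bracketed element really is $edade$ up to the identification in the corner, and no hidden hypothesis on $b$ is used there.
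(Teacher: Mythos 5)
Your proposal follows essentially the same route as the paper's own proof: pass to the corner monoids $eSe$ and $fSf$ via Lemmas~\ref{chen 2} and~\ref{chen 1} with $e=dd^{(1)}$, $f=d^{(1)}d$, exploit the factorizations $e(dadb)e=e(da)e\cdot e(db)e$ and $f(bdad)f=f(bd)f\cdot f(ad)f$ with the first factor invertible in the corner, and then verify the formula for $b^{\parallel d}d^{(1)}a^{\parallel d}$ directly. Only two bookkeeping slips need correcting: the defining relations are $a^{\parallel d}ad=d=daa^{\parallel d}$ (not $dad\,a^{\parallel d}=d$), and in the final step $x\leq_\mathcal{L}d$ comes from $a^{\parallel d}\leq_\mathcal{L}d$ while $x\leq_\mathcal{R}d$ comes from $b^{\parallel d}\leq_\mathcal{R}d$ (you swapped these roles), though since $a^{\parallel d}\leq_\mathcal{H}d$ and $b^{\parallel d}\leq_\mathcal{H}d$ all the needed facts are at hand and the argument goes through.
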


\begin{proof} It is known that $a$ is invertible along $d$ implies that $d$ is regular (see \cite{Mary and Patricio}). Let $d^{(1)}$ be an inner inverse of $d$ and let $e=dd^{(1)}$ and $f=d^{(1)}d$. As $a$ is invertible along $d$, then $e(da)e\in (eSe)^{-1}$ and $f(ad)f \in (fSf)^{-1}$ by Lemmas \ref{chen 1} and \ref{chen 2}.

(i) $\Leftrightarrow$ (ii) It follows from Lemmas \ref{chen 1} and \ref{chen 2} that $b$ is invertible along $d$ if and only if $e(db)e\in (eSe)^{-1}$, and $adb$ is invertible along $d$ if and only if $e(dadb)e \in (eSe)^{-1}$. Note that $e(dadb)e=e(da)e \cdot e(db)e$ and $e(da)e\in (eSe)^{-1}$. Hence, $e(dadb)e \in (eSe)^{-1}$ if and only if $e(db)e\in (eSe)^{-1}$.

(i) $\Leftrightarrow$ (iii) It is similar to (i) $\Leftrightarrow$ (ii) by noting $f(bdad)f=f(bd)f \cdot f(ad)f$.

We next show that $m=b^{\parallel d} d^{(1)}a^{\parallel d}$ is the inverse of $adb$ along $d$.

We have $a^{\parallel d}ad=d=daa^{\parallel d}$ and $a^{\parallel d}=x_1d=dx_2$ for some $x_1,x_2\in S$. Also, $b^{\parallel d}bd=d=dbb^{\parallel d}$ and $b^{\parallel d}=y_1d=dy_2$ for some $y_1,y_2\in S$.

It follows that
\begin{eqnarray*}
madbd &=&b^{\parallel d} d^{(1)}(a^{\parallel d} ad)bd=b^{\parallel d} d^{(1)}dbd\\
&=&y_1 d d^{(1)}dbd=y_1dbd=b^{\parallel d}bd\\
&=&d
\end{eqnarray*}
and

\begin{eqnarray*}
dadbm&=&da(dbb^{\parallel d}) d^{(1)}a^{\parallel d}=dadd^{(1)}a^{\parallel d}\\
&=&dadd^{(1)}dx_2=dadx_2=daa^{\parallel d}\\
&=&d.
\end{eqnarray*}

As $m=b^{\parallel d} d^{(1)}a^{\parallel d}=dy_2d^{(1)}a^{\parallel d}=b^{\parallel d} d^{(1)}x_1d$, then $m\leq_\mathcal{H}d$.

Hence, $(adb)^{\parallel d}=b^{\parallel d} d^{(1)}a^{\parallel d}$.

Similarly, we can check $(bda)^{\parallel d}=a^{\parallel d} d^{(1)}b^{\parallel d}$. \hfill$\Box$
\end{proof}

\begin{remark} \label{semigroupmark} {\rm The assumption ``$a$ is invertible along $d$'' in Theorem \ref{mary product} can not be dropped. Indeed, take $S$ be the monoid of all infinite complex matrices with finite nonzero elements in each column and let $d=1\in S$, $a=\Sigma_{i=1}^\infty e_{i,i+1}\in S$ and $b=\Sigma_{i=1}^\infty e_{i+1,i}\in S$, where $e_{i,j}$ denotes the infinite matrix whose $(i,j)$-entry is 1 and other entries are zero. Then $1=adb=ab$ is invertible along $1$. However, $\Sigma_{i=2}^\infty e_{i,i}=bda=ba$ is not invertible along 1.}
\end{remark}

As a consequence, we present a result on the reverse order law for the inverse of the product of triple elements along an element.

\begin{corollary}  Let $a,b,d\in S$. If $a$ is invertible along $d$ and $d\in S^\#$, then the following conditions are equivalent{\rm :}

\emph{(i)} $b$ is invertible along $d$.

\emph{(ii)} $adb$ is invertible along $d$.

\emph{(iii)} $bda$ is invertible along $d$.

In this case, $(adb)^{\parallel d}=b^{\parallel d} d^{\parallel d} a^{\parallel d}$ and  $(bda)^{\parallel d}=a^{\parallel d} d^{\parallel d} b^{\parallel d}$.
\end{corollary}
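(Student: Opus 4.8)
The plan is to obtain this corollary as an immediate specialization of Theorem \ref{mary product}. The equivalence of (i), (ii) and (iii) requires no new argument at all: the standing hypothesis here, ``$a$ is invertible along $d$'', is precisely the hypothesis of Theorem \ref{mary product}, so the chain (i) $\Leftrightarrow$ (ii) $\Leftrightarrow$ (iii) is inherited verbatim. The only genuinely new point is to rewrite the formulae $(adb)^{\parallel d}=b^{\parallel d} d^{(1)}a^{\parallel d}$ and $(bda)^{\parallel d}=a^{\parallel d} d^{(1)}b^{\parallel d}$, which in Theorem \ref{mary product} hold for an arbitrary inner inverse $d^{(1)}\in d\{1\}$, in terms of $d^{\parallel d}$.

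The key observation is that, under the extra assumption $d\in S^\#$, the element $d^{\parallel d}$ is itself an inner inverse of $d$. Indeed, as recalled in the introductory discussion of the excerpt, $d\in S^\#$ is equivalent to $d$ being invertible along $d$, and in that case $d^{\parallel d}=d^\#$; since $dd^\#d=d$, we get $d^{\parallel d}=d^\#\in d\{1\}$. Applying Theorem \ref{mary product} with this particular choice $d^{(1)}=d^{\parallel d}$, its conclusion becomes exactly $(adb)^{\parallel d}=b^{\parallel d} d^{\parallel d} a^{\parallel d}$ and $(bda)^{\parallel d}=a^{\parallel d} d^{\parallel d} b^{\parallel d}$, which is what we want.

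For completeness I would note that Theorem \ref{mary product} is indeed applicable: ``$a$ invertible along $d$'' forces $d$ to be regular (as already used in the proof of that theorem, via \cite{Mary and Patricio}), so no hypothesis is missing. I do not expect any real obstacle here, since the corollary is essentially a repackaging of Theorem \ref{mary product}; the one point to state carefully is that it is the \emph{membership} $d^{\parallel d}\in d\{1\}$ (equivalently $dd^{\parallel d}d=d$), rather than any finer property of $d^\#$, that licenses substituting $d^{\parallel d}$ into the ``for all $d^{(1)}\in d\{1\}$'' clause of Theorem \ref{mary product}.
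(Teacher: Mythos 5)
Your proposal is correct and matches the paper's (implicit) treatment: the paper states this corollary without proof as an immediate consequence of Theorem \ref{mary product}, and the intended argument is exactly your specialization $d^{(1)}=d^{\#}=d^{\parallel d}\in d\{1\}$, which is licensed by the ``for all choices $d^{(1)}\in d\{1\}$'' clause. Your added care in noting that only the membership $dd^{\parallel d}d=d$ is needed is exactly the right point to make explicit.
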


As special results of Theorem \ref{mary product}, it follows that

\begin{corollary} \label{coro product} Let $b,d\in S$. If $d\in S^\#$, then the following conditions are equivalent{\rm :}

\emph{(i)} $b$ is invertible along $d$.

\emph{(ii)} $bd$ is invertible along $d$.

\emph{(iii)} $db$ is invertible along $d$.

In this case, $(bd)^{\parallel d}=d^{\parallel d} b^{\parallel d}$ and $(db)^{\parallel d}=b^{\parallel d} d^{\parallel d}$.
\end{corollary}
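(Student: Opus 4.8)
The plan is to obtain this as a direct specialization of Theorem \ref{mary product} with $a$ taken to be the unity $1$ of $S$. The enabling observation is the one recorded in the paragraph following Corollary \ref{left right MP}: $d\in S^\#$ if and only if $1$ is invertible along $d$, and in that case $1^{\parallel d}=dd^\#$ while $d^{\parallel d}=d^\#$. So, under the hypothesis $d\in S^\#$, the element $a=1$ satisfies the standing assumption ``$a$ is invertible along $d$'' of Theorem \ref{mary product} (equivalently, of the corollary immediately preceding the one to be proved), and every conclusion of that theorem applies verbatim with $a=1$.

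First I would substitute $a=1$ into Theorem \ref{mary product}. Since $adb=1\cdot d\cdot b=db$ and $bda=b\cdot d\cdot 1=bd$, the three equivalent statements of that theorem read precisely: $b$ is invertible along $d$; $db$ is invertible along $d$; $bd$ is invertible along $d$. This is exactly the equivalence of (i), (ii), (iii).

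For the formulae I would use the explicit expressions from Theorem \ref{mary product}, which hold for every choice $d^{(1)}\in d\{1\}$, and take $d^{(1)}=d^\#$ (legitimate because $dd^\#d=d$ when $d\in S^\#$). Then $(db)^{\parallel d}=(adb)^{\parallel d}=b^{\parallel d}d^{(1)}a^{\parallel d}=b^{\parallel d}d^\#(dd^\#)$, and using the group-inverse identity $d^\#dd^\#=d^\#$ together with $d^{\parallel d}=d^\#$ this collapses to $b^{\parallel d}d^{\parallel d}$. Symmetrically, $(bd)^{\parallel d}=(bda)^{\parallel d}=a^{\parallel d}d^{(1)}b^{\parallel d}=(dd^\#)d^\#b^{\parallel d}$, which by $dd^\#d^\#=d^\#dd^\#=d^\#$ reduces to $d^\#b^{\parallel d}=d^{\parallel d}b^{\parallel d}$. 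One can also bypass Theorem \ref{mary product} entirely and invoke the immediately preceding corollary with $a=1$; then $(adb)^{\parallel d}=b^{\parallel d}d^{\parallel d}a^{\parallel d}$ and $(bda)^{\parallel d}=a^{\parallel d}d^{\parallel d}b^{\parallel d}$ simplify in the same way after inserting $a^{\parallel d}=1^{\parallel d}=dd^\#$ and $d^{\parallel d}=d^\#$.

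There is no genuinely hard step here; the only points demanding a little care are the correct identification $1^{\parallel d}=dd^\#$ (so that the one-sided truncations by $dd^\#$ and $d^\#d$ cancel as intended), and keeping track of which of $db$ and $bd$ corresponds to $adb$ and which to $bda$ once $a$ is set equal to $1$.
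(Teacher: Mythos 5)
Your proposal is correct and follows essentially the same route as the paper: specialize Theorem \ref{mary product} to $a=1$, using $1^{\parallel d}=dd^\#$ and $d^{\parallel d}=d^\#$ to simplify the resulting formulae. The only cosmetic difference is that you fix $d^{(1)}=d^\#$ while the paper keeps a general inner inverse and cancels it via $b^{\parallel d}=dx$ and $dd^{(1)}d=d$; both simplifications are valid.
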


\begin{proof} Since $1^{\parallel d}=dd^\#$, $d^{\parallel d}=d^\#$ and $b^{\parallel d}=dx$ for some $x\in S$, we have $(bd)^{\parallel d}=1^{\parallel d}d^{(1)}b^{\parallel d}=d^\#dd^{(1)}dx=d^\#dx=d^\#b^{\parallel d}=d^{\parallel d} b^{\parallel d}$.

We may use the same reasoning to obtain $(db)^{\parallel d}=b^{\parallel d} d^{\parallel d}$.  \hfill$\Box$
\end{proof}

Suppose $a=d^*$ in Theorem \ref{mary product}. Then it follows that

\begin{corollary}  \label{lastc} Let $S$ be a $*$-monoid and let $b,d\in S$. If $d\in S^\dag$, then the following conditions are equivalent{\rm :}

\emph{(i)} $b$ is invertible along $d$.

\emph{(ii)} $d^*db$ is invertible along $d$.

\emph{(iii)} $bdd^*$ is invertible along $d$.

In this case, $(d^*db)^{\parallel d}=b^{\parallel d}d^{\parallel d^*}(d^*)^{\parallel d}$ and $(bdd^*)^{\parallel d}=(d^*)^{\parallel d}d^{\parallel d^*}b^{\parallel d}$.
\end{corollary}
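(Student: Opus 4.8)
The plan is to obtain Corollary \ref{lastc} as the special case $a=d^*$ of Theorem \ref{mary product}. The first step is to check that, under this substitution, the hypothesis ``$a$ is invertible along $d$'' becomes exactly ``$d\in S^\dag$'': by Corollary \ref{left right MP} together with the remark following it (which cites \cite[Theorem 11]{Mary}), $d\in S^\dag$ if and only if $d^*$ is invertible along $d$, and in that case $(d^*)^{\parallel d}=(d^\dag)^*$. Hence Theorem \ref{mary product} is applicable with $a=d^*$ precisely when $d$ is Moore-Penrose invertible, which is the standing assumption of the corollary.

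Granting this, the equivalence of (i), (ii) and (iii) is immediate: with $a=d^*$ the product $adb$ is $d^*db$ and $bda$ is $bdd^*$, so the three statements in Theorem \ref{mary product} coincide verbatim with the three statements of the corollary.

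For the formulae, recall that Theorem \ref{mary product} asserts $(adb)^{\parallel d}=b^{\parallel d}d^{(1)}a^{\parallel d}$ and $(bda)^{\parallel d}=a^{\parallel d}d^{(1)}b^{\parallel d}$ \emph{for every} choice of inner inverse $d^{(1)}\in d\{1\}$. Since $d\in S^\dag$, the Moore--Penrose inverse $d^\dag$ is one such inner inverse; moreover $d^\dag=d^{\parallel d^*}$ (again by the remark after Corollary \ref{left right MP}, since $a^{\parallel a^*}=a^\dag$). Substituting $d^{(1)}=d^{\parallel d^*}$ and $a^{\parallel d}=(d^*)^{\parallel d}$ yields $(d^*db)^{\parallel d}=b^{\parallel d}d^{\parallel d^*}(d^*)^{\parallel d}$ and $(bdd^*)^{\parallel d}=(d^*)^{\parallel d}d^{\parallel d^*}b^{\parallel d}$, as claimed.

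There is no genuine obstacle here: the proof is essentially a matter of reading off the correct specialization of Theorem \ref{mary product} and recording the identifications $(d^*)^{\parallel d}=(d^\dag)^*$, $d^{\parallel d^*}=d^\dag$, and $d^\dag\in d\{1\}$, all of which are already available in the excerpt. The only point that deserves a word of care is that the formula in Theorem \ref{mary product} holds for an \emph{arbitrary} inner inverse of $d$, which is exactly what licenses replacing the generic $d^{(1)}$ by the specific $d^\dag=d^{\parallel d^*}$ so that the answer is expressed purely in terms of inverses along elements.
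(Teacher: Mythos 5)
Your proposal is correct and follows essentially the same route as the paper: specialize Theorem \ref{mary product} to $a=d^*$, using the identifications $d\in S^\dag\Leftrightarrow d^*$ invertible along $d$, $(d^*)^{\parallel d}=(d^\dag)^*$ and $d^{\parallel d^*}=d^\dag$. The only (harmless) difference is in the last step: you invoke the ``for all $d^{(1)}\in d\{1\}$'' clause to take $d^{(1)}=d^\dag$ directly, whereas the paper keeps $d^{(1)}$ generic and verifies by the computation $xdd^{(1)}dd^\dag(d^\dag)^*=xdd^\dag(d^\dag)^*$ that the expression reduces to the one stated; your version is if anything slightly cleaner.
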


\begin{proof} By Theorem \ref{mary product}, we have $(d^*db)^{\parallel d}=b^{\parallel d}d^{(1)}(d^*)^{\parallel d}$. Note that $(d^*)^{\parallel d}=(d^\dag)^*$ and $b^{\parallel d}$ can be written as $xd$ for an appropriate $x$ in $S$. Hence, $(d^*db)^{\parallel d}=xdd^{(1)}dd^\dag(d^\dag)^*=xdd^\dag(d^\dag)^*=b^{\parallel d}d^{\parallel d^*}(d^*)^{\parallel d}$.

Similarly, $(bdd^*)^{\parallel d}=(d^*)^{\parallel d}d^{\parallel d^*}b^{\parallel d}$. \hfill$\Box$
\end{proof}

\section{Further results in rings}

In this section, let $R$ be an associative unital ring. An involution $*: R \to R; a \mapsto a^*$ is an anti-isomorphism on $R$ satisfying $(a^*)^* = a$, $(ab)^* = b^*a^*$ and $(a+b)^* = a^* + b^*$ for all $a,b\in R$.

We next begin with two lemmas, which play an important role in the sequel.

\begin{lemma} \label{Jlemma} Let $a,b\in R$. Then we have

\emph{(i)} If $(1+ab)x=1$ for some $x\in R$, then $(1+ba)(1-bxa)=1$.

\emph{(ii)} If $y(1+ab)=1$ for some $y\in R$, then $(1-bya)(1+ba)=1$.
\end{lemma}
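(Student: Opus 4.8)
This is the classical Jacobson lemma in its "one-sided" refinement: from a one-sided inverse of $1+ab$ one constructs, by an explicit formula, a one-sided inverse of $1+ba$ on the corresponding side. The plan is to verify the two stated identities by direct computation, using only the associativity and distributivity of the ring together with the hypothesis. Since the two parts are mirror images under the involution-free "left/right swap", I would prove (i) in full and then note that (ii) follows by the symmetric argument (or, alternatively, by applying (i) to the ring with reversed multiplication, or by noting that $y(1+ab)=1$ in $R$ is the same as $(1+b^{\mathrm{op}}a^{\mathrm{op}})y=1$ in $R^{\mathrm{op}}$).

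For part (i): assume $(1+ab)x=1$, i.e. $x + abx = 1$, equivalently $x = 1 - abx$. I would compute the product $(1+ba)(1-bxa)$ by expanding:
\[
(1+ba)(1-bxa) = 1 - bxa + ba - babxa = 1 + ba - (bxa + babxa) = 1 + ba - b\,(x+abx)\,a.
\]
Now substitute $x + abx = (1+ab)x = 1$ to get $1 + ba - b\cdot 1 \cdot a = 1 + ba - ba = 1$, which is exactly the desired identity. So the key maneuver is simply to factor a $b$ on the left and an $a$ on the right out of the two "bad" terms $bxa$ and $babxa$, recognize the hypothesis inside, and cancel.

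For part (ii): assume $y(1+ab)=1$, i.e. $y + yab = 1$, equivalently $y = 1 - yab$. Expanding symmetrically,
\[
(1-bya)(1+ba) = 1 + ba - bya - byaba = 1 + ba - b\,(y+yab)\,a = 1 + ba - b\cdot 1\cdot a = 1,
\]
using $y + yab = y(1+ab) = 1$. This completes both parts.

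I do not anticipate a real obstacle here: the whole content is the choice of the correction term ($1-bxa$ in (i), $1-bya$ in (ii)), and once that is written down the verification is a three-line expansion. The only point to be careful about is bookkeeping in the noncommutative expansion — keeping the factors $b\,\cdot\,a$ straddling $x$ (resp. $y$) in the right order so that the hypothesis $(1+ab)x=1$ (resp. $y(1+ab)=1$) can be invoked verbatim rather than some twisted variant. No appeal to any earlier result in the paper is needed.
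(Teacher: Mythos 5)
Your computation is correct, and this is the standard verification of the one-sided Jacobson lemma: the paper states Lemma \ref{Jlemma} without proof, and the direct expansion you give — grouping $bxa+babxa=b(x+abx)a=b\bigl((1+ab)x\bigr)a=ba$ and its mirror image for (ii) — is exactly the intended argument. Nothing further is needed.
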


By the symbols $R_l^{-1}$, $R_r^{-1}$ and $ R^{-1}$ we denote the sets of all left invertible, right invertible and invertible elements in $R$, respectively. It follows from Lemma \ref{Jlemma} that $1+ab\in R_l^{-1}$ if and only if $1+ba\in R_l^{-1}$, and $1+ab\in R_r^{-1}$ if and only if $1+ba\in R_r^{-1}$. Further, $1+ab\in R^{-1}$ if and only if $1+ba \in R^{-1}$. In this case, $(1+ba)^{-1}=1-b(1+ab)^{-1}a$, which is known as Jacobson's Lemma.

\begin{lemma} \label{left right mary} {\rm \cite[Corollaries 3.3 and 3.5]{Zhu and chen}} Let $a,m\in R$ with $m$ regular. Then the following conditions are equivalent{\rm :}

\emph{(i)} $a$ is (left, right) invertible along $m$.

\emph{(ii)} $u=ma +1- mm^{(1)}$ is (left, right) invertible.

\emph{(iii)} $v=am+1-m^{(1)}m$ is (left, right) invertible.
\end{lemma}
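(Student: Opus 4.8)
The plan is to run all three assertions (``left'', ``right'', two-sided) through the idempotents $e:=mm^{(1)}$ and $f:=m^{(1)}m$. First I would record the two identities $um=mam$ and $mv=mam$, each an immediate consequence of $em=m=mf$ (that is, of $mm^{(1)}m=m$), together with the rewritings $u=1+m(a-m^{(1)})$ and $v=1+(a-m^{(1)})m$. By Lemma~\ref{Jlemma} and the remark following it, $1+m(a-m^{(1)})$ lies in $R_l^{-1}$ (resp.\ $R_r^{-1}$, resp.\ $R^{-1}$) exactly when $1+(a-m^{(1)})m$ does, so this already gives (ii)~$\Leftrightarrow$~(iii) in each of the three versions. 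It then remains to link (i) with (ii); note that (i) does not refer to $m^{(1)}$, so the particular inner inverse chosen is immaterial.

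Consider the ``left'' version. By Lemma~\ref{left mary}(i), $a$ is left invertible along $m$ iff $m\leq_\mathcal{L}mam$, i.e.\ $m=smam$ for some $s\in R$. If (ii) holds, say $wu=1$, then $m=wum=w(mam)\in Rmam$, so $m\leq_\mathcal{L}mam$ and (i) follows. For the reverse implication I would pass to the Peirce decomposition of $u$ relative to $e=mm^{(1)}$: a short computation gives $eu=ma$ and $(1-e)u=1-e$, so with respect to $e$ the element $u$ is block upper triangular, with lower–right corner the unit $1-e$ of $(1-e)R(1-e)$ and upper–left corner $e(ma)e=mae\in eRe$; hence $u$ is left invertible in $R$ as soon as $mae$ admits a left inverse lying in $eRe$. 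Right–multiplying $m=smam$ by $m^{(1)}$ yields $e=mm^{(1)}=smae$, and then $ese\in eRe$ satisfies $ese\cdot mae=e$; tracing this through produces an explicit left inverse of $u$ (of the shape $(1-e)+ese-ese\,ma(1-e)$, whose verification is routine). This establishes (i)~$\Leftrightarrow$~(ii), hence also (i)~$\Leftrightarrow$~(iii), for the left version.

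The ``right'' version follows by applying the left version in the opposite ring $R^{\mathrm{op}}$: there $m^{(1)}$ is still an inner inverse of $m$, the preorders $\leq_\mathcal{L}$ and $\leq_\mathcal{R}$ are interchanged, the element $mam$ is unchanged, and the roles of $u$ and $v$ are swapped, so the left–version equivalence in $R^{\mathrm{op}}$ reads precisely as the right–version equivalence in $R$ (alternatively one argues directly, using $mv=mam$ and the Peirce decomposition of $v$ relative to $f=m^{(1)}m$). For the two–sided version one combines the previous two: recalling from the Introduction that $a$ is invertible along $m$ iff it is both left and right invertible along $m$, and that an element of $R$ is invertible iff it is both left and right invertible, the left and right cases give (i)~$\Leftrightarrow$~(ii)~$\Leftrightarrow$~(iii) for invertibility as well. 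The step I expect to be the real obstacle is (i)~$\Rightarrow$~(ii) in the left version: trying to ``push the relation $m=smam$ around'' inside the one–sided ideal $Ru$ fails because $Ru$ need not be two–sided, and it is exactly here that regularity of $m$ enters, through the idempotent $mm^{(1)}$ and the corner–ring reduction above.
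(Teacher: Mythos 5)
Your proof is correct. Note that the paper itself gives no argument for this lemma --- it is imported verbatim from \cite[Corollaries 3.3 and 3.5]{Zhu and chen} --- so there is nothing internal to compare against; your route (the identities $um=mam$, $mv=mam$, Jacobson's Lemma applied to $u=1+m(a-m^{(1)})$ and $v=1+(a-m^{(1)})m$ for (ii)$\Leftrightarrow$(iii), and the Peirce reduction relative to $e=mm^{(1)}$ for (i)$\Rightarrow$(ii)) is the standard one for such unit criteria, and your explicit left inverse $(1-e)+ese-ese\,ma(1-e)$ of $u$ does check out, since $(1-e)u=1-e$, $ese\cdot u=ese\,ma=e+ese\,ma(1-e)$, and $e=smae$ follows from $m=smam$ by right-multiplying by $m^{(1)}$.
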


It is known that $1^{\parallel a}$ exists if and only if $a^\#$ exists in a ring $R$. Hence, Lemma \ref{left right mary} also gives an existence criterion of group inverse, that is, $a$ is group invertible if and only if $a+1-aa^{(1)}$ is invertible if and only if $a+1-a^{(1)}a$ is invertible, for a regular element $a\in R$.

Given an element $a\in R$, we use the symbols $a_l^{-1}$ and $a_r^{-1}$ to denote a left and a right inverse of $a$, respectively.

Applying  Corollary \ref{left right MP} and Lemma \ref{left right mary}, we derive the following result, which recovers the classical existence criterion of Moore-Penrose inverse (see, e.g. \cite[Theorem 1.2]{Patricio and Mendes}) in a ring.

\begin{theorem} \label{onemp} Let $R$ be a ring with involution and let $a\in R$ be regular. Then the following conditions are equivalent{\rm :}

\emph{(i)} $a\in R^\dag$.

\emph{(ii)} $u=aa^*+1-aa^{(1)}$ is left invertible.

\emph{(iii)} $v=a^*a+1-a^{(1)}a$ is left invertible.

\emph{(iv)} $u=aa^*+1-aa^{(1)}$ is right invertible.

\emph{(v)} $v=a^*a+1-a^{(1)}a$ is right invertible.

In this case, $a^\dag=(u_l^{-1}a)^*a(u_l^{-1}a)^*=(av_r^{-1})^*a(av_r^{-1})^*$.
\end{theorem}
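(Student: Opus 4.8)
The plan is to deduce Theorem \ref{onemp} by combining Corollary \ref{left right MP} with Lemma \ref{left right mary}, applied to the element $a^*$ along $m=a$. First I would recall from Corollary \ref{left right MP} that $a\in R^\dag$ if and only if $a^*$ is left invertible along $a$, and also if and only if $a^*$ is right invertible along $a$. Since $a$ is assumed regular with an inner inverse $a^{(1)}$, Lemma \ref{left right mary} (with the roles played by ``$a$'' being $a^*$ and ``$m$'' being $a$) translates each of these one-sided invertibility-along statements into a corresponding one-sided invertibility of $u=a a^*+1-aa^{(1)}$ and of $v=a^*a+1-a^{(1)}a$. Concretely: $a^*$ is left invertible along $a$ iff $u$ is left invertible iff $v$ is left invertible; and $a^*$ is right invertible along $a$ iff $u$ is right invertible iff $v$ is right invertible. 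Chaining these equivalences with Corollary \ref{left right MP} yields the equivalence of (i)--(v).

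For the formula $a^\dag=(u_l^{-1}a)^*a(u_l^{-1}a)^*$, I would first extract from the left invertibility of $u$ an explicit $\{1,4\}$- (and $\{1,3\}$-)inverse of $a$. The key is to produce an $x\in R$ with $a=xaa^*a$. From $u_l^{-1}u=1$ we get $u_l^{-1}(aa^*+1-aa^{(1)})=1$, hence, multiplying on the right by $aa^{(1)}$ and using $(aa^{(1)})^2=aa^{(1)}$, we obtain $u_l^{-1}aa^*aa^{(1)}=aa^{(1)}$; multiplying further on the right by $a$ gives $u_l^{-1}aa^*a=a$ (using $aa^{(1)}a=a$). Thus $x:=u_l^{-1}a$ satisfies $a=xaa^*a$, i.e. $a=(u_l^{-1}a)aa^*a$. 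Now Lemma \ref{MP representation1} applies verbatim with this $x$: it tells us $(xa)^*=(u_l^{-1}a\cdot a)^*$... wait, one must be careful — Lemma \ref{MP representation1} writes the relation as $a=xaa^*a$ and concludes $a^\dag=(xa)^*a(xa)^*$ where $x$ here equals $u_l^{-1}a$, so that $xa = u_l^{-1}a^2$ — I should instead invoke the lemma with its hypothesis matched so that $(u_l^{-1}a)$ plays the role of the multiplier, giving $a^\dag=(u_l^{-1}a\cdot a)^* a (u_l^{-1}a\cdot a)^*$. To land on the stated form $(u_l^{-1}a)^*a(u_l^{-1}a)^*$ I would instead observe directly that $x:=u_l^{-1}a$ is itself a $\{1,3,4\}$-like data giving $a=xa\cdot a^*a$ is not quite what the stated formula wants; the cleaner route is: from $a = u_l^{-1}aa^*a$ deduce via Lemma \ref{MP representation1} (with multiplier $u_l^{-1}$) that $(u_l^{-1}a)^*$ is both a $\{1,3\}$- and $\{1,4\}$-inverse of $a$ and $a^\dag=(u_l^{-1}a)^*a(u_l^{-1}a)^*$. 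That is exactly the asserted expression.

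For the companion formula $a^\dag=(av_r^{-1})^*a(av_r^{-1})^*$, I would argue symmetrically using right invertibility of $v=a^*a+1-a^{(1)}a$: from $vv_r^{-1}=1$ we get $(a^*a+1-a^{(1)}a)v_r^{-1}=1$, multiply on the left by $a^{(1)}a$ to obtain $a^{(1)}aa^*av_r^{-1}=a^{(1)}a$, then on the left by $a$ to get $aa^*av_r^{-1}=a$, i.e. $a=aa^*a\,v_r^{-1}$ with $y:=av_r^{-1}$ the natural multiplier after one more manipulation. Then Lemma \ref{MP representation2} (the dual of Lemma \ref{MP representation1}, with $a=aa^*ay$) gives $(ay)^*=(a v_r^{-1})^*$ is a $\{1,3\}$- and $\{1,4\}$-inverse of $a$ and $a^\dag=(av_r^{-1})^*a(av_r^{-1})^*$, as claimed.

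The main obstacle is bookkeeping rather than conceptual: one must check carefully that the substitution of $a^*$ for the generic element and $a$ for $m$ in Lemma \ref{left right mary} produces \emph{exactly} the matrices $u=aa^*+1-aa^{(1)}$ and $v=a^*a+1-a^{(1)}a$ (it does, since $ma+1-mm^{(1)}$ becomes $a a^*+1-a a^{(1)}$ and $am+1-m^{(1)}m$ becomes $a^*a+1-a^{(1)}a$), and that the inner inverse $a^{(1)}$ of $a$ serves as the required $m^{(1)}$. The other delicate point is verifying the idempotent cancellations ($aa^{(1)}a=a$, $a^{(1)}aa^{(1)}$ need not be invoked, $(aa^{(1)})^2=aa^{(1)}$, $(a^{(1)}a)^2=a^{(1)}a$) so that the passage from $u_l^{-1}u=1$ to $a=u_l^{-1}aa^*a$ (and its dual) is clean; I would write these few lines out explicitly since they are exactly where an error would hide. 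Once $a=u_l^{-1}aa^*a$ and $a=aa^*a\,v_r^{-1}$ are in hand, Lemmas \ref{MP representation1} and \ref{MP representation2} finish the formulae immediately.
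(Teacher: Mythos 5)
Your proposal is correct and follows essentially the same route as the paper: the equivalences come from Corollary \ref{left right MP} combined with Lemma \ref{left right mary} applied to $a^*$ along $m=a$, and the formulae come from $ua=aa^*a$ (resp.\ $av=aa^*a$) together with Lemmas \ref{MP representation1} and \ref{MP representation2} with multiplier $u_l^{-1}$ (resp.\ $v_r^{-1}$). Your mid-proof hesitation about whether the multiplier is $u_l^{-1}$ or $u_l^{-1}a$ is resolved correctly, and your detour through $aa^{(1)}$ is harmless (one can multiply $u_l^{-1}u=1$ on the right by $a$ directly), so nothing is missing.
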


\begin{proof} As $u=aa^*+1-aa^{(1)}$, then $ua=aa^*a$. It follows that $a=u_l^{-1}aa^*a$ since $u$ is left invertible.
By Lemma \ref{MP representation1}, we know that $(u_l^{-1}a)^*$ is both a $\{1,3\}$-inverse and a $\{1,4\}$-inverse of $a$.
Hence, $a^\dag=a^{(1,4)}aa^{(1,3)}=(u_l^{-1}a)^*a(u_l^{-1}a)^*$.

Similarly, as $v=a^*a+1-a^{(1)}a$ is right invertible, then $a=aa^*av_r^{-1}$. Lemma \ref{MP representation2} ensures $a^\dag=(av_r^{-1})^*a(av_r^{-1})^*$. \hfill$\Box$
\end{proof}

As a corollary of Theorem \ref{onemp}, it follows that

\begin{corollary} \label{conemp} Let $R$ be a ring with involution and let $a\in R$ be regular. Then the following conditions are equivalent{\rm :}

\emph{(i)} $a\in R^\dag$.

\emph{(ii)} $u=aa^*+1-aa^{(1)}$ is invertible.

\emph{(iii)} $v=a^*a+1-a^{(1)}a$ is invertible.

In this case, $a^\dag=(u^{-1}a)^*a(u^{-1}a)^*=(av^{-1})^*a(av^{-1})^*$.
\end{corollary}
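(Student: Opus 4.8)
The plan is to derive Corollary \ref{conemp} directly from Theorem \ref{onemp} by observing that a left-invertible element which is simultaneously right-invertible is in fact invertible, together with a symmetry argument relating $u$ and $v$ through the involution. First I would note that $a\in R^\dag$ holds if and only if, by Theorem \ref{onemp}, $u=aa^*+1-aa^{(1)}$ is left invertible (condition (ii) there) and also if and only if $u$ is right invertible (condition (iv) there); hence $a\in R^\dag$ forces $u$ to be both left and right invertible, which is exactly the statement that $u$ is invertible. Conversely, if $u$ is invertible, it is in particular left invertible, so Theorem \ref{onemp} gives $a\in R^\dag$. This establishes (i) $\Leftrightarrow$ (ii). The equivalence (i) $\Leftrightarrow$ (iii) is obtained in the identical manner using conditions (iii) and (v) of Theorem \ref{onemp} applied to $v=a^*a+1-a^{(1)}a$.

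For the formula, once $u$ is invertible I would simply specialize the expression $a^\dag=(u_l^{-1}a)^*a(u_l^{-1}a)^*$ from Theorem \ref{onemp} by taking the (now unique two-sided) inverse $u_l^{-1}=u^{-1}$, which yields $a^\dag=(u^{-1}a)^*a(u^{-1}a)^*$; likewise $v_r^{-1}=v^{-1}$ gives $a^\dag=(av^{-1})^*a(av^{-1})^*$. No new computation is needed here: the identities $ua=aa^*a$ and $a=aa^*av^{-1}$, already recorded in the proof of Theorem \ref{onemp}, combined with Lemmas \ref{MP representation1} and \ref{MP representation2}, deliver both expressions verbatim.

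I do not expect a genuine obstacle, since this is a routine corollary; the only point that needs a word of care is the justification that $u$ being both one-sided invertible on the left (from (ii)) and on the right (from (iv)) really does make it two-sided invertible — this is the standard fact that in an associative unital ring a left inverse and a right inverse of the same element must coincide. One could alternatively phrase the argument self-containedly by invoking Lemma \ref{left right mary}: since $m=aa^{(1)}$ type reasoning shows $u$ and $v$ arise as the elements $ma+1-mm^{(1)}$ and $am+1-m^{(1)}m$ for a suitable regular $m$, and Lemma \ref{left right mary} asserts the one-sided (resp. two-sided) invertibility of these is equivalent to the one-sided (resp. two-sided) invertibility of $a$ along $m$; but the shortest route is the direct appeal to Theorem \ref{onemp} as above. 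Thus the write-up will be three or four lines, mirroring the structure of the proof of Corollary \ref{coro product}.
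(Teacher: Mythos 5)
Your proposal is correct and is exactly the argument the paper intends: the corollary is stated without proof as an immediate consequence of Theorem \ref{onemp}, since conditions (ii) and (iv) there show $u$ is simultaneously left and right invertible, hence invertible, and the formula specializes $u_l^{-1}$ to $u^{-1}$ (similarly for $v$). No differences to report.
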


From Theorem \ref{left g-MP} and Lemma \ref{left right mary}, it follows that

\begin{corollary} Let $R$ be a ring with involution and let $a\in R$ be regular. Then the following conditions are equivalent{\rm :}

\emph{(i)} $a$ is left g-MP invertible.

\emph{(ii)} $a^2a^*+1-aa^{(1)}$ is left invertible.

\emph{(iii)} $aa^*a+1-a^{(1)}a$ is left invertible.
\end{corollary}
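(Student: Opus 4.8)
The plan is to combine Theorem \ref{left g-MP} with the ring-theoretic criterion in Lemma \ref{left right mary}, exactly as the parenthetical ``From Theorem \ref{left g-MP} and Lemma \ref{left right mary}'' suggests. By Theorem \ref{left g-MP}, $a$ is left g-MP invertible if and only if $aa^*$ is left invertible along $a$. Since $a$ is assumed regular, with a fixed inner inverse $a^{(1)}\in a\{1\}$, I may apply Lemma \ref{left right mary} with the roles ``$a$'' $\rightsquigarrow aa^*$ and ``$m$'' $\rightsquigarrow a$ (note $a$ is the regular element here, and $aa^*$ is the element being inverted along it). That lemma tells me $aa^*$ is left invertible along $a$ if and only if $u = a\cdot(aa^*) + 1 - aa^{(1)} = a^2a^* + 1 - aa^{(1)}$ is left invertible, and also if and only if $v = (aa^*)\cdot a + 1 - a^{(1)}a = aa^*a + 1 - a^{(1)}a$ is left invertible. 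Chaining these two equivalences gives (i) $\Leftrightarrow$ (ii) and (i) $\Leftrightarrow$ (iii), which is the claim.

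First I would recall that $a$ regular guarantees the existence of some $a^{(1)}$, so the expressions in (ii) and (iii) make sense; I should also note (as is standard, and as the paragraph after Lemma \ref{left right mary} implicitly uses) that left invertibility of $u$ or $v$ is independent of the choice of inner inverse $a^{(1)}$ — this follows from Lemma \ref{left right mary} itself since its condition (i) does not mention $a^{(1)}$. Then the proof is essentially a one-line substitution: write out $u$ and $v$ from Lemma \ref{left right mary} with $ma = a(aa^*) = a^2a^*$ and $am = (aa^*)a = aa^*a$, and invoke the equivalence (i) $\Leftrightarrow$ (ii) $\Leftrightarrow$ (iii) of that lemma together with the equivalence (i) $\Leftrightarrow$ (ii) of Theorem \ref{left g-MP}.

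There is no real obstacle here; the only point that needs a moment's care is bookkeeping about which symbol in Lemma \ref{left right mary} plays which role — it is the element $aa^*$ that is ``invertible along'' the regular element $a$, not the other way around, so in the lemma's notation one has ``$a$'' $= aa^*$ and ``$m$'' $= a$, hence ``$ma$'' becomes $a(aa^*)=a^2a^*$ and ``$am$'' becomes $(aa^*)a=aa^*a$, and ``$mm^{(1)}$'' and ``$m^{(1)}m$'' become $aa^{(1)}$ and $a^{(1)}a$. Once that identification is made correctly, the three conditions match verbatim. So I would present the proof as: ``By Theorem \ref{left g-MP}, $a$ is left g-MP invertible iff $aa^*$ is left invertible along $a$. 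Since $a$ is regular, Lemma \ref{left right mary} applied to $aa^*$ along $a$ shows this is equivalent to the left invertibility of $a^2a^* + 1 - aa^{(1)}$ and to the left invertibility of $aa^*a + 1 - a^{(1)}a$. $\Box$''
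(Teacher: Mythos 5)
Your proposal is correct and is exactly the argument the paper intends: the corollary is stated with no written proof beyond the citation of Theorem \ref{left g-MP} and Lemma \ref{left right mary}, and your substitution (the element $aa^*$ inverted along the regular element $a$, giving $ma=a^2a^*$ and $am=aa^*a$) is the right way to chain the two results. Nothing is missing.
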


By Theorem \ref{right g-MP} and Lemma \ref{left right mary}, we get

\begin{corollary} Let $R$ be a ring with involution and let $a\in R$ be regular. Then the following conditions are equivalent{\rm :}

\emph{(i)} $a$ is right g-MP invertible.

\emph{(ii)} $a^*a^2+1-a^{(1)}a$ is right invertible.

\emph{(iii)} $aa^*a+1-aa^{(1)}$ is right invertible.
\end{corollary}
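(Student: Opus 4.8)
The plan is to obtain this corollary as the right-handed counterpart of the preceding one, by combining Theorem~\ref{right g-MP} with Lemma~\ref{left right mary}. The first step is to invoke Theorem~\ref{right g-MP}: condition (i), that $a$ is right g-MP invertible, is equivalent to the statement that $a^*a$ is right invertible along $a$.

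The second step is to apply Lemma~\ref{left right mary} with the role of the ``element'' played by $a^*a$ and the role of $m$ played by $a$; this is legitimate because $a$ is regular by hypothesis, so an inner inverse $a^{(1)}$ exists. Selecting the ``right'' alternative throughout the lemma, $a^*a$ is right invertible along $a$ if and only if $u = a\,(a^*a) + 1 - aa^{(1)} = aa^*a + 1 - aa^{(1)}$ is right invertible, which is condition (iii), and if and only if $v = (a^*a)\,a + 1 - a^{(1)}a = a^*a^2 + 1 - a^{(1)}a$ is right invertible, which is condition (ii). Concatenating the equivalences of the two steps yields (i) $\Leftrightarrow$ (ii) $\Leftrightarrow$ (iii).

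I do not anticipate any genuine obstacle here: the whole argument is a substitution into two results already established in the paper, and the only point requiring a moment's care is to pick the right-invertible branch of Lemma~\ref{left right mary} consistently, rather than mixing it with the left-invertible or two-sided branches. If one wished to enrich the statement, one could, in the spirit of Theorem~\ref{onemp}, extract from Theorem~\ref{right g-MP} an explicit right inverse of $a^*a$ along $a$, and hence explicit witnesses for the right g-MP invertibility of $a$, in terms of a right inverse of $u$ or of $v$; but the equivalences as stated need nothing beyond the two cited results.
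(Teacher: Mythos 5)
Your argument is exactly the paper's: the corollary is stated there as an immediate consequence of Theorem~\ref{right g-MP} (which converts right g-MP invertibility of $a$ into right invertibility of $a^*a$ along $a$) and Lemma~\ref{left right mary} applied with $m=a$, yielding $u=aa^*a+1-aa^{(1)}$ and $v=a^*a^2+1-a^{(1)}a$. Your substitution and choice of the right-invertible branch are both correct, so nothing is missing.
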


The following theorem gives characterizations of left and right g-MP inverses in terms of units in a ring.

\begin{theorem} Let $R$ be a ring with involution and let $a\in R$ be regular. Then the following conditions are equivalent{\rm :}

\emph{(i)} $a$ is both left and right g-MP invertible.

\emph{(ii)} $u=aa^*a+1-aa^{(1)}$ is invertible.

\emph{(iii)} $v=aa^*a+1-a^{(1)}a$ is invertible.
\end{theorem}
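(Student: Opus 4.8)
The plan is to translate conditions (ii) and (iii) into statements about invertibility along $a$, and then to combine these with the characterizations of left and right g-MP invertibility already obtained. Since $a$ is regular we may write
\[
u = aa^*a + 1 - aa^{(1)} = a(a^*a) + 1 - aa^{(1)}, \qquad v = aa^*a + 1 - a^{(1)}a = (aa^*)a + 1 - a^{(1)}a .
\]
Applying Lemma~\ref{left right mary} with the regular element $m=a$: taking the ``element invertible along $m$'' to be $a^*a$ shows that $u$ is invertible if and only if $a^*a$ is invertible along $a$, and taking it to be $aa^*$ shows that $v$ is invertible if and only if $aa^*$ is invertible along $a$. So the theorem reduces to proving that each of ``$a^*a$ is invertible along $a$'' and ``$aa^*$ is invertible along $a$'' is equivalent to $a$ being both left and right g-MP invertible.

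For the bookkeeping I would first recall, from the discussion following Corollary~\ref{left right MP} (together with its right-handed dual), that $a$ is left g-MP invertible iff $a$ is left regular and $a\in R^\dag$, and $a$ is right g-MP invertible iff $a$ is right regular and $a\in R^\dag$; since $a\in R^\#$ exactly when $a$ is both left and right regular, $a$ is both left and right g-MP invertible iff $a\in R^\#\cap R^\dag$. Now for (i) $\Rightarrow$ (ii) and (i) $\Rightarrow$ (iii): assuming $a\in R^\#\cap R^\dag$, the element $a^*$ is invertible along $a$ (one of the equivalences recorded after Corollary~\ref{left right MP}), and since $a\in R^\#$, Corollary~\ref{coro product} applied with $d=a$ and $b=a^*$ upgrades this to the invertibility along $a$ of both $a^*a$ and $aa^*$; by the first paragraph, $u$ and $v$ are invertible.

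For (ii) $\Rightarrow$ (i): if $u$ is invertible then $a^*a$ is invertible along $a$; in particular $a^*a$ is right invertible along $a$, so $a$ is right g-MP invertible by Theorem~\ref{right g-MP}, hence $a\in R^\dag$ and $a$ is right regular. Moreover ``$a^*a$ invertible along $a$'' forces $a\leq_\mathcal{L} a(a^*a)a = aa^*a^2$, and since $aa^*a^2\in Ra^2$ this gives $a\leq_\mathcal{L} a^2$, i.e.\ $a$ is left regular; hence $a\in R^\#\cap R^\dag$, so $a$ is both left and right g-MP invertible. The implication (iii) $\Rightarrow$ (i) is the mirror image: $v$ invertible gives $aa^*$ invertible along $a$, whence $a$ is left g-MP invertible by Theorem~\ref{left g-MP} (so $a\in R^\dag$ and $a$ is left regular), while $a\leq_\mathcal{R} a(aa^*)a = a^2a^*a\in a^2R$ yields that $a$ is right regular, so again $a\in R^\#\cap R^\dag$.

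The routine part is the Jacobson-type bookkeeping packaged in Lemma~\ref{left right mary}. The one point demanding care is that one-sided invertibility of $u$ (resp.\ $v$) by itself is \emph{not} sufficient — $u$ right invertible only encodes right g-MP invertibility and $v$ left invertible only encodes left g-MP invertibility — so to run (ii) $\Rightarrow$ (i) and (iii) $\Rightarrow$ (i) one genuinely needs full invertibility, and the additional regularity of $a$ must be extracted from the Green $\mathcal{L}$/$\mathcal{R}$ part of ``invertible along $a$'' via the factorizations $aa^*a^2\in Ra^2$ and $a^2a^*a\in a^2R$.
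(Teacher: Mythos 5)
Your proof is correct, but it follows a genuinely different route from the paper's. You translate the invertibility of $u=a(a^*a)+1-aa^{(1)}$ and $v=(aa^*)a+1-a^{(1)}a$ back into the statements ``$a^*a$ is invertible along $a$'' and ``$aa^*$ is invertible along $a$'' via Lemma~\ref{left right mary}, and then finish entirely with the Section~2 machinery: Theorems~\ref{left g-MP} and~\ref{right g-MP} supply the Moore--Penrose part and one of the two regularity conditions, the extra Green's-relation inclusion $a\leq_\mathcal{L} aa^*a^2\in Ra^2$ (resp.\ $a\leq_\mathcal{R} a^2a^*a\in a^2R$) supplies the other, and Corollary~\ref{coro product} with $d=a$, $b=a^*$ handles the converse direction. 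The paper instead works directly at the level of units: for (i)~$\Rightarrow$~(ii) it factors $u=(aa^*aa^{(1)}+1-aa^{(1)})(a+1-aa^{(1)})$, where the first factor is a unit by Corollary~\ref{conemp} plus Jacobson's Lemma and the second by group invertibility; for (ii)~$\Rightarrow$~(i) it applies Jacobson's Lemma to pass from $u$ to $a^*a^2+1-a^{(1)}a$, reads off $a\in aa^*aR$ (hence $a\in R^\dag$), and then divides $u$ by the Moore--Penrose unit to recover the group-invertibility unit $a+1-aa^{(1)}$. Your argument is more conceptual and reuses the inverse-along-an-element results already established, and it also makes transparent why one-sided invertibility of $u$ or $v$ captures only one-sided g-MP invertibility; the paper's argument is more computational but exhibits the explicit unit factorization, which is of independent interest in the ring setting. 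Both are complete; I see no gap in yours (in particular, the identification of $u$ and $v$ with the correct sides of Lemma~\ref{left right mary} is done correctly, and Corollary~\ref{coro product} is legitimately applicable since (i) gives $a\in R^\#$).
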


\begin{proof}

(i) $\Rightarrow$ (ii) If $a$ is both left and right g-MP invertible, then $a$ is both group invertible and Moore-Penrose invertible. As $a$ is group invertible, then it follows that $a+1-aa^{(1)}\in R^{-1}$.  Also, $a$ is Moore-Penrose invertible implies that $aa^*+1-aa^{(1)}\in R^{-1}$ by Corollary \ref{conemp}. Hence, $aa^*aa^{(1)}+1-aa^{(1)}\in R^{-1}$ by Jacobson's Lemma. So, $(aa^*aa^{(1)}+1-aa^{(1)})(a+1-aa^{(1)})=u\in R^{-1}$.

(ii) $\Rightarrow$ (i) As $u=aa^*a+1-aa^{(1)} \in R^{-1}$, then $u'=a^*a^2+1-a^{(1)}a \in R^{-1}$ from Jacobson's Lemma. Hence, $au'=aa^*a^2$ and $a=aa^*a^2(u')^{-1}\in aa^*aR$, which means that $a$ is Moore-Penrose invertible by \cite[Theorem 2.16]{Zhu and chen}. Applying Corollary \ref{conemp}, we know that if $a$ is Moore-Penrose invertible then $aa^*+1-aa^{(1)} \in R^{-1}$ and hence $aa^*aa^{(1)}+1-aa^{(1)} \in R^{-1}$ according to Jacobson's Lemma. Note that $a+1-aa^{(1)}=(aa^*aa^{(1)}+1-aa^{(1)})^{-1}u\in R^{-1}$. It follows that $a$ is group invertible. Thus, $a$ is both left and right g-MP invertible.

(i) $\Leftrightarrow$ (iii) It is similar to (i) $\Leftrightarrow$ (ii). \hfill$\Box$
\end{proof}

Given a regular element $d\in R$, it is known from \cite[Theorem 3.2]{Mary and Patricio} that $a^{\parallel d}$ exists if and only if $da+1-dd^{(1)}$ is invertible if and only if $ad+1-d^{(1)}d$ is invertible. Moreover, $a^{\parallel d}=(da+1-dd^{(1)})^{-1}d=d(ad+1-d^{(1)}d)^{-1}$. Applying this, we have

\begin{corollary} \label{main mary product} Let $a,b,d\in R$ and let $a$ be invertible along $d$. Then the following conditions are equivalent{\rm :}

\emph{(i)} $b$ is invertible along $d$.

\emph{(ii)} $adb$ is invertible along $d$.

\emph{(iii)} $bda$ is invertible along $d$.

In this case, $(adb)^{\parallel d}=b^{\parallel d} v^{-1}$ and  $(bda)^{\parallel d}=u^{-1} b^{\parallel d}$, where $u=da+1-dd^{(1)}$ and $v=ad+1-d^{(1)}d$.
\end{corollary}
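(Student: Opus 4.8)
The plan is to reduce the statement to Theorem~\ref{mary product} combined with the unit-based description of the inverse along an element recalled immediately before the corollary: for a regular $d$, $a^{\parallel d}$ exists if and only if $u=da+1-dd^{(1)}$ is invertible if and only if $v=ad+1-d^{(1)}d$ is invertible, and in that case $a^{\parallel d}=u^{-1}d=dv^{-1}$.

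First I would dispose of the equivalences (i) $\Leftrightarrow$ (ii) $\Leftrightarrow$ (iii). Since $a$ is invertible along $d$, $d$ is regular, so Theorem~\ref{mary product} applies verbatim and already yields these three equivalences with no extra work. It remains only to translate the formulae of Theorem~\ref{mary product} into ring language. So, assume (i); then both $a^{\parallel d}$ and $b^{\parallel d}$ exist, and all the expressions to be written down are meaningful.

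Next, fix an inner inverse $d^{(1)}$ of $d$ and put $u=da+1-dd^{(1)}$, $v=ad+1-d^{(1)}d$. By Theorem~\ref{mary product}, applied with this same $d^{(1)}$, $(adb)^{\parallel d}=b^{\parallel d}d^{(1)}a^{\parallel d}$. Substituting $a^{\parallel d}=dv^{-1}$ gives $(adb)^{\parallel d}=b^{\parallel d}d^{(1)}d\,v^{-1}$; since $b^{\parallel d}\leq_\mathcal{L}d$, writing $b^{\parallel d}=xd$ gives $b^{\parallel d}d^{(1)}d=xdd^{(1)}d=xd=b^{\parallel d}$, whence $(adb)^{\parallel d}=b^{\parallel d}v^{-1}$. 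Dually, $(bda)^{\parallel d}=a^{\parallel d}d^{(1)}b^{\parallel d}=u^{-1}d\,d^{(1)}b^{\parallel d}$, and using $b^{\parallel d}\leq_\mathcal{R}d$, i.e.\ $b^{\parallel d}=dy$, we get $dd^{(1)}b^{\parallel d}=dd^{(1)}dy=dy=b^{\parallel d}$, so $(bda)^{\parallel d}=u^{-1}b^{\parallel d}$.

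I do not expect any real obstacle here; the corollary is a direct specialization of Theorem~\ref{mary product}. The only point requiring a little care is to invoke Theorem~\ref{mary product} with exactly the inner inverse $d^{(1)}$ used to build $u$ and $v$, which is legitimate because that theorem asserts the identity $(adb)^{\parallel d}=b^{\parallel d}d^{(1)}a^{\parallel d}$ for \emph{every} $d^{(1)}\in d\{1\}$; and to note at the outset that $a^{\parallel d}$ and $b^{\parallel d}$ exist under the hypotheses, so that the absorption steps $b^{\parallel d}d^{(1)}d=b^{\parallel d}$ and $dd^{(1)}b^{\parallel d}=b^{\parallel d}$ are available from $b^{\parallel d}\leq_\mathcal{H}d$.
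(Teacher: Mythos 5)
Your proposal is correct and follows essentially the same route as the paper: both invoke Theorem~\ref{mary product} for the equivalences and the formula $(adb)^{\parallel d}=b^{\parallel d}d^{(1)}a^{\parallel d}$, then substitute $a^{\parallel d}=dv^{-1}$ (resp.\ $a^{\parallel d}=u^{-1}d$) and absorb $d^{(1)}d$ (resp.\ $dd^{(1)}$) into $b^{\parallel d}$ using $b^{\parallel d}\leq_\mathcal{H}d$. Your explicit remark that the theorem holds for every choice of $d^{(1)}\in d\{1\}$ is a welcome precision, but the argument is the same.
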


\begin{proof}
It follows from Theorem \ref{mary product} that $(adb)^{\parallel d}=b^{\parallel d} d^{(1)} a^{\parallel d}$. As $a^{\parallel d}=dv^{-1}$ and $b^{\parallel d}=y_1d$ for some $y\in R$, then $(adb)^{\parallel d}=y_1d d^{(1)} dv^{-1}=y_1d v^{-1}=b^{\parallel d} v^{-1}$.

Similarly, $(bda)^{\parallel d}=u^{-1} b^{\parallel d}$.  \hfill$\Box$
\end{proof}

\begin{corollary} \label{main mary product inverse} Let $a,b,d\in R$ and let $a$ be invertible along $d$. Then the following conditions are equivalent{\rm :}

\emph{(i)} $b$ is invertible along $d$.

\emph{(ii)} $u=dadb+1-dd^{(1)}$ is invertible.

In this case, $b^{\parallel d}=u^{-1}dad$.
\end{corollary}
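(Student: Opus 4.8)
The plan is to deduce this corollary directly from Theorem~\ref{mary product} together with the ring-theoretic existence criterion for the inverse along an element recalled just before Corollary~\ref{main mary product} (that is, \cite[Theorem 3.2]{Mary and Patricio}). First note that since $a$ is invertible along $d$, the element $d$ is regular (as used in the proof of Theorem~\ref{mary product}), so an inner inverse $d^{(1)}$ exists and $u=dadb+1-dd^{(1)}$ is well defined. The key observation is the identity $u=d(adb)+1-dd^{(1)}$, i.e.\ $u$ is precisely the unit attached to the element $adb$ and the regular element $d$ in \cite[Theorem 3.2]{Mary and Patricio}.

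For the equivalence (i)$\Leftrightarrow$(ii) I would chain two facts. On one side, Theorem~\ref{mary product}, applied under the standing hypothesis that $a$ is invertible along $d$, gives that $b$ is invertible along $d$ if and only if $adb$ is invertible along $d$. On the other side, \cite[Theorem 3.2]{Mary and Patricio}, applied with the element $adb$ in place of ``$a$'' and with the regular element $d$, gives that $adb$ is invertible along $d$ if and only if $d(adb)+1-dd^{(1)}=u$ is invertible. Composing these two equivalences yields (i)$\Leftrightarrow$(ii).

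For the formula, assume these conditions hold. Then \cite[Theorem 3.2]{Mary and Patricio} also gives $(adb)^{\parallel d}=u^{-1}d$, while Theorem~\ref{mary product} gives $(adb)^{\parallel d}=b^{\parallel d}d^{(1)}a^{\parallel d}$. Equating the two expressions and right-multiplying by $ad$, one uses $a^{\parallel d}ad=d$ and then $b^{\parallel d}d^{(1)}d=b^{\parallel d}$ (valid because $b^{\parallel d}\leq_\mathcal{L}d$, so $b^{\parallel d}=yd$ for some $y\in R$) to conclude $u^{-1}dad=b^{\parallel d}$. Alternatively, Corollary~\ref{main mary product} gives $(adb)^{\parallel d}=b^{\parallel d}v^{-1}$ with $v=ad+1-d^{(1)}d$, whence $b^{\parallel d}=u^{-1}dv=u^{-1}(dad+d-dd^{(1)}d)=u^{-1}dad$ using $dd^{(1)}d=d$; I would present this shorter route.

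There is no substantial obstacle here; the argument is essentially bookkeeping on top of results already established. The only points requiring care are applying \cite[Theorem 3.2]{Mary and Patricio} with the compound element $adb$ in the role of ``$a$'' (and checking that the corresponding unit is exactly $u$), and, in the formula, correctly invoking the one-sided Green domination $b^{\parallel d}\leq_\mathcal{L}d$ — or equivalently the identity $dd^{(1)}d=d$ — to carry out the final simplification.
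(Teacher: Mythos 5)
Your argument is correct, and both of your routes to the formula check out; but it is organized differently from the paper's. You chain Theorem \ref{mary product} (``$b$ invertible along $d$ iff $adb$ invertible along $d$'') with the Mary--Patr\'{i}cio unit criterion applied to the compound element $adb$, reading $u=d(adb)+1-dd^{(1)}$, and then extract $b^{\parallel d}$ from $(adb)^{\parallel d}=u^{-1}d$ either by right-multiplying by $ad$ (using $a^{\parallel d}ad=d$ and $b^{\parallel d}d^{(1)}d=b^{\parallel d}$) or via Corollary \ref{main mary product}. The paper instead stays entirely at the level of units: it factors $u=(dadd^{(1)}+1-dd^{(1)})(db+1-dd^{(1)})$, notes that the first factor is invertible because $a^{\parallel d}$ exists, so that $u$ is a unit exactly when $db+1-dd^{(1)}$ is (which by \cite[Theorem 3.2]{Mary and Patricio} is the existence criterion for $b^{\parallel d}$), and then computes $b^{\parallel d}=(db+1-dd^{(1)})^{-1}d=u^{-1}(dadd^{(1)}+1-dd^{(1)})d=u^{-1}dad$. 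Your version leans on the semigroup-level product theorem and makes the equivalence (i)$\Leftrightarrow$(ii) fully explicit (the paper's proof only writes out the formula, leaving the equivalence implicit in the factorization), while the paper's version is a self-contained one-line ring computation that never needs to identify $(adb)^{\parallel d}$ at all. Both are legitimate; the only care point in yours, which you correctly flag, is that $d$ is regular (since $a^{\parallel d}$ exists) so that the criterion applies to $adb$.
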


\begin{proof} Since $u=dadb+1-dd^{(1)}=(dadd^{(1)}+1-dd^{(1)})(db+1-dd^{(1)})$, it follows that $db+1-dd^{(1)}=(dadd^{(1)}+1-dd^{(1)})^{-1}u$. Applying \cite[Theorem 3.2]{Mary and Patricio}, we have $b^{\parallel d}=(db+1-dd^{(1)})^{-1}d=u^{-1}(dadd^{(1)}+1-dd^{(1)})d=u^{-1}dad$. \hfill$\Box$
\end{proof}

\section{Remarks and questions}

We close this section with a remark and a question:\\

4.1. It is known that $ab$ is invertible along $d$ may not imply $ba$ is invertible along $d$ in a monoid (see Remark \ref{semigroupmark}). Assume that $ab$ and $ba$ are both invertible along $d$ in a monoid $S$.  Does Cline's formula for the inverse along an element hold? i.e., if $(ab)^{\parallel d}=a((ba)^{\parallel d})^2b$? In fact, this formula does not hold. For instance, let $S$ be the monoid of 2 by 2 complex matrices. Set $a^2=a=(\begin{smallmatrix} 1 & 0 \\ 1 & 0 \end{smallmatrix})\in S$ , $b=(\begin{smallmatrix} 1 & 0 \\ 0 & 1 \end{smallmatrix})\in S$ and $d=(\begin{smallmatrix} 1 & 1 \\ 0 & 0 \end{smallmatrix})\in S$. Then $a^{\parallel d}=(\begin{smallmatrix} \frac{1}{2} & \frac{1}{2} \\ 0 & 0 \end{smallmatrix})$. But $a((ba)^{\parallel d})^2b=\frac{1}{4} (\begin{smallmatrix} 1 & 1\\ 1 & 1 \end{smallmatrix}) \neq (\begin{smallmatrix} \frac{1}{2} & \frac{1}{2} \\ 0 & 0 \end{smallmatrix})= (ab)^{\parallel d}$.

4.2. Suppose that $ab$ and $ba$ are both invertible along $d$ in a monoid. Can we give a necessary and sufficient condition such that Cline's formula for the inverse along an element to hold.

\bigskip
\centerline {\bf ACKNOWLEDGMENTS}
\vskip 2mm

The authors are highly grateful to the referees for their valuable comments which led to improvements of this paper. In particular, the condition (iii) in Theorems 2.10 and 2.11, Theorems 2.15 and 2.16 were suggested to the authors by one referee.  The first author gratefully acknowledges China Scholarship Council for giving him a pursing for his joint PhD study in Department of Mathematics and Applications, University of Minho, Portugal. Also, he gratefully acknowledges CMAT-Centro de Matem\'{a}tica, Universidade do Minho for providing him a working office. This research is supported by the National Natural Science Foundation of China (No. 11371089), the Specialized Research Fund for the Doctoral Program of Higher Education (No. 20120092110020), the Natural Science Foundation of Jiangsu Province (No. BK20141327), the Scientific  Innovation Research  of College Graduates in Jiangsu Province (No. CXLX13-072), the Scientific Research Foundation of Graduate School of Southeast University, the FEDER Funds through ¡®Programa Operacional Factores de Competitividade-COMPETE', the Portuguese Funds through FCT- `Funda\c{c}\~{a}o para a Ci\^{e}ncia e a Tecnologia', within the project PEst-OE/MAT/UI0013/2014.

\begin{flushleft}
{\bf References}
\end{flushleft}

\end{document}